\newtheorem{thm}{Theorem}[section]
\newtheorem{prop}[thm]{Proposition}
\newtheorem{lem}[thm]{Lemma}
\newtheorem{rem}[thm]{Remark}}
\newcommand{\ra}{\rightarrow}
\newcommand{\dis}{\displaystyle}
\def\R{\mathbb R}
\def\d{\text{\rm{d}}}
\def\E{\mathbb E}
\def\p{\mathbb P}\def\e{\text{\rm{e}}}
\def\La{\Lambda}
\def\veps{\varepsilon}
\def\S{\mathcal S}
\newcommand{\fin}{\hspace*{\fill}\rule{0.3em}{1ex}}
\newenvironment{proof}{{\bf \noindent Proof.}}{\fin}
\numberwithin{equation}{section}
\begin{document}

\title{Permanence and extinction of regime-switching predator-prey models
}

\author{
  Jianhai Bao${}^b$,  Jinghai Shao${}^a$\\[0.2cm]
{\small a: School of Mathematical Sciences, Beijing Normal University, Beijing 100875, China}\\
{\small b: Department of Mathematics, Central South University,
Changsha, Hunan 410083,   China}
}
\maketitle
\begin{abstract}
  In this work we study the permanence and extinction of  a regime-switching predator-prey model
  with Beddington-DeAngelis functional response. The switching process is used to describe
  the random changes of corresponding parameters such as birth and death rates of a species in different environments. When a prey will die out in
  some fixed environments and will not in the others, our criteria can justify whether it dies out or not in a random switching environment.
  Our criteria are rather sharp, and they cover the known on-off type results on permanence of predator-prey models without switching. Our method relies on the recent study of ergodicity of regime-switching diffusion processes.
\end{abstract}
AMS subject Classification (2010):\  60H10, 92D25, 60J60\\
\noindent \textbf{Keywords}: predator-prey model; extinction;
permanence; regime-switching diffusion
\section{Introduction}
In an ecosystem, all species  evolve and compete to seek resources
to sustain their existence. Denote the two population sizes at time
$t$ by $X_t$ and $Y_t$, respectively. $X_t$ denotes the population
size of the prey and $Y_t$ stands for the population size of the
predator. The Kolmogorov predator-prey model is a general
deterministic model taking the form:
\begin{equation*}
  \begin{cases}
    \dot X_t=X_tf(X_t,Y_t),\\
    \dot Y_t=Y_tg(X_t,Y_t).
  \end{cases}
\end{equation*}
For $f(x,y)=b-py$ and $g(x,y)=cx-d$, one gets the well-known
Lotka-Volterra model. These deterministic models have been studied
extensively. We refer to the monograph \cite{Sigm} due to Hofbauer
and Sigmund for a related study on this deterministic model. Here
$f(X,Y)$ and $g(X,Y)$ stand for the capita growth rate of each
species, which is dependent on the population sizes of both species.
Various functional response functions have been used considerably in
modeling population dynamics such as Holling types, Hassel-Varley
type, Leslie-Gower Holling  II type, Beddington-DeAngelis type, etc.
However, due to the continuous fluctuation in the environment, the
birth rates, death rates, carrying capacities, competition
coefficients and all the other parameters involved in this model
exhibit random fluctuation to a great extent. To describe this
phenomenon, stochastic predator-prey models with different kinds of
responses are proposed and there are many works to study these
models (cf. \cite{AHS}, \cite{JJ}, \cite{KK} and references
therein).
 For a predator-prey system with
Beddington-DeAngelis functional response,
 in practice, we
usually estimate the birth rate of the prey and death rate of the
predator by their average values plus errors which follow normal
distributions. 
As a result,
one gets the following stochastic predator-prey model
   with Beddington-DeAngelis functional response:
\begin{equation}\label{1.1}
\begin{cases}
\d X_t=X_t\Big(a_1-b_1X_t-\frac{c_1 Y_t}{m_1+m_2 X_t+m_3Y_t}\Big)\d t+\alpha X_t\d B_1(t),\\[0.2cm]
\d Y_t=  Y_t\Big(-a_2-b_2 Y_t+\frac{c_2X_t}{m_1+m_2X_t+m_3Y_t}\Big)\d t+\beta Y_t\d B_2(t),
\end{cases}
\end{equation}
with $X_0=x_0>0$ and $Y_0=y_0>0$. All the parameters in (\ref{1.1})
are positive, and $(B_1(t))$, $(B_2(t))$ are  Brownian motions on
the line. For the model \eqref{1.1}, by Khasminskii's criterion on
existence of stationary distributions for diffusion processes (see,
e.g., \cite[Theorem 4.1]{Kha}), Ji-Jiang \cite{JJ} revealed that
$(X_t,Y_t)$ enjoys a stationary distribution $\mu$, which is also
ergodic;
 Du et al. \cite{DDY} showed that if $-a_2-\frac{\beta^2}{2}-\int_{-\infty}^{\infty} \frac{c_2
\e^x}{m_1+m_2\e^x} f^\ast(x)\d x=:\lambda<0$, where $f^\ast(x)=
C\exp(\frac{2a_1-\alpha^2}{\alpha^2}x-\frac{2b_1}{\alpha^2} \e^x)$
with $C$ being the normalizing constant,   the predator $Y_t$ will
eventually extinct (i.e. $\lim_{t\rightarrow\infty}Y_t=0$ a.s.),
and that, in the case $\lambda>0$,  $(X_t,Y_t)$ has a unique
invariant probability measure $\mu^*$ which is supported in
$\R_+^{2,o}$; Liu-Wang \cite{LW} showed that the equilibrium
$(x^*,y^*)$ is stochastically asymptotically stable in the large
(i.e. for any initial data $(x_0,y_0)$,
$\lim_{t\rightarrow\infty}X_t=x^*$ a.s. and
$\lim_{t\rightarrow\infty}Y_t=y^*$ a.s.); Li-Zhang \cite{LZ}
provided some sufficient conditions for non-persistence  in the mean
(i.e. $\limsup_{t\rightarrow\infty} (\frac{1}{t}\int_0^tX_s\d s
)=0$ a.s.), strong persistence in the mean (i.e.
$\liminf_{t\rightarrow\infty} (\frac{1}{t}\int_0^tX_s\d s )>0$
a.s.), and weak persistence in the mean (i.e.
$\limsup_{t\rightarrow\infty} (\frac{1}{t}\int_0^tX_s\d s )>0$ a.s.)
of the prey population $X_t$.

From the viewpoint of biological modeling, variability of the
environment  may have an important impact on the dynamics  of the
community.  For instance, the distinctive season  change such as dry
and rainy seasons is observed in monsoon forests, and it
characterizes the vegetation there. Also, in boreal and arctic
regions, seasonality exerts a strong influence on the dynamics of
mammals.  Moreover, the growth rates, the death rates and the
carrying capacities often vary according to the changes in nutrition
and food resources. All of these changes usually cannot be described
by the traditional deterministic or stochastic predator-prey models.
Therefore, it is natural to consider the predator-prey model in a
random environment, which is formulated by an additional factor
process.  More precisely, consider the following regime-switching
predator-prey model with Beddington-DeAngelis functional response:
\begin{equation}\label{1.2}
\begin{cases}
\d X_t=X_t\Big(a_1(\La_t)-b_1(\La_t)X_t-\frac{c_1(\La_t) Y_t}{m_1(\La_t)+m_2(\La_t)X_t+m_3(\La_t)Y_t}\Big)\d t+\alpha(\La_t)X_t\d B_1(t)\\[0.2cm]
\d Y_t=Y_t\Big(-a_2(\La_t)-b_2(\La_t)Y_t+
\frac{c_2(\La_t)X_t}{m_1(\La_t)+m_2(\La_t)X_t+m_3(\La_t)Y_t}\Big)\d
t+\beta(\La_t)Y_t\d B_2(t)
\end{cases}
\end{equation}
with $X_0=x_0>0$ and $Y_0=y_0>0$, where $(B_1(t))$ and $(B_2(t))$
are Brownian motions on the line, and $(\La_t)$ is a continuous time
Markov chain with a finite state space $\S=\{1,2,\ldots,N\}$, $1\leq
N<\infty$. Throughout this paper, the processes $(B_1(t))$,
$(B_2(t))$ and $(\La_t)$ are defined on a
  complete probability space $(\Omega,\mathscr F, \{\mathscr F_t\}_{t\geq 0}, \mathbb P)$, and $(\La_t)$ is independent of $(B_1(t))$ and $(B_2(t))$.
 The
parameters $a_k(\cdot)$, $b_k(\cdot)$, $c_k(\cdot)$ for $k=1,2$, and
$m_l(\cdot)$ for $l=1,2,3$, are all positive functions on $\S$.

The dynamical system \eqref{1.2} is a  regime-switching diffusion
process, which has been widely applied in control problems, storage
modeling, neuronal activity, biology and mathematical finance. We
refer the readers to \cite{BBG, CH, PS, Sh14a, Sh1, SX, SX2} and the
monographs \cite{MY, YZ} for the study on recurrence, ergodicity,
stochastic stability, numerical approximation of regime-switching
diffusion processes with Markovian switching or state-dependent
switching in a finite state space or an infinite state space. There
is a vast literature on  population dynamics with regime switching.
For instance, Du-Du \cite{DD} described the omega-limit set of
Kolmogorov systems of competitive type under the telegraph noise and
investigated properties of stationary density; Zhu-Yin \cite{ZYb}
examined certain long-run-average limits, and Zhu-Yin \cite{ZY09}
investigated long-time behavior of sample paths for competitive
Lotka-Volterra ecosystems. In the study of a population system,
permanence and extinction are two important and interesting
properties, respectively  meaning that the population system will
survive or die out in the future. Yuan et al. \cite{YML} discussed
extinction for stochastic hybrid delay population dynamics with $n$
interacting species,  and Li et al. \cite{LAJM} and Liu-Wang
\cite{LWa} studied permanence and extinction for stochastic logistic
populations with single species.

The regime-switching predator-prey model can describe a very
important and interesting situation. We consider a simple example to
introduce it. Let us consider the case $\S=\{1,2\}$, where ``1"
denotes the rainy season and ``2" denotes the dry season. The
situation that $a_1(1)-\frac 12\alpha^2(1)>0$ and $a_1(2)-\frac
12\alpha^2(2)<0$ is rather possible to occur. This means that the
birth rate with perturbation of $(X_t)$ in the rainy season makes
sure that $(X_t)$ will not die out, but that of $(X_t)$ in the dry
season makes $(X_t)$ die out. Then a natural question arises: will
$(X_t)$ in model \eqref{1.2} die out or not? This question is very
interesting and represents an essential advantage of model
\eqref{1.2} over model \eqref{1.1}. However, the solution of this
question is not easy to derive, and so far there are few results of
this type on the regime-switching predator-prey model or
regime-switching population dynamics. On this topic, we refer the
reader to \cite{PS} for the explicit examples to see the complexity
of the regime-switching diffusion processes, and to \cite{CH, Sh14a,
Sh14b, SX2, YZ} for some solutions of this type on ergodicity and
stability of regime-switching diffusion processes. In this work, we
shall provide a sharp criterion to justify whether $(X_t)$ will die
out or not (see Theorem \ref{main} below).

Let $(q_{ij})_{i,j\in\S}$ be the $Q$-matrix of the Markov chain
$(\La_t)$ which means that
\begin{equation} \label{1.3}
\p(\La_{t+\delta}=l|\La_t=k)=\left\{\begin{array}{ll} q_{kl} \delta+o(\delta), &\text{if}\ k\neq l,\\
                                       1+q_{kk} \delta+o(\delta), & \text{if}\ k=l,
                         \end{array}\right.
\end{equation}
for a sufficiently small $\delta>0$. Throughout this work, the
matrix $Q=(q_{ij})$ is assumed to be irreducible and conservative,
i.e. $q_{kk}=-q_k:=-\sum_{j\neq k} q_{kj}<0$. As $\S$ is a finite
set and $(q_{ij})$ is irreducible, the theory of Markov chains tells
us that $(\La_t)$ is ergodic and there exists a unique stationary
distribution $(\mu_i)$ for it. To state our main result, we need to
introduce two auxiliary processes. Let
\begin{equation}
  \label{phi}
  \d \varphi_t=\varphi_t(a_1(\La_t)-b_1(\La_t)\varphi_t)\d t+\alpha(\La_t)\varphi_t\d B_1(t),
\end{equation}
and
\begin{equation}
  \label{psi}
  \d \psi_t=\psi_t\Big(-a_2(\La_t)+\frac{c_2(\La_t)}{m_2(\La_t)}-b_2(\La_t)\psi_t\Big)\d t+\beta(\La_t)\psi_t\d
  B_2(t)
\end{equation} with $\varphi_0=X_0=x_0>0$ and $\psi_0=Y_0=y_0>0$, where $(\La_t)$,  $(B_1(t))$, $(B_2(t))$ are defined as in (\ref{1.2}).
By the comparison theorem for SDEs (cf. \cite{IW}), $\varphi_t\geq
X_t$ and $\psi_t\geq Y_t$ a.s. for all $t>0$. Our main result of
this work is the following theorem.
\begin{thm}\label{main}
Let $(X_t,Y_t,\La_t)$ be defined by (\ref{1.2}) and (\ref{1.3}) and $(\mu_i)_{i\in\S}$ be the stationary distribution  of the process  $(\La_t)$.
\begin{itemize}
  \item[$(i)$] If \,$\dis\sum_{i\in\S} \mu_i\big(a_1(i)-\frac 12 \alpha^2(i)\big)<0$, then $\dis\lim_{t\ra \infty} X_t=0$ a.s.
  , $\dis\lim_{t\ra \infty} Y_t=0$ a.s.
  \item[$(ii)$] If\, $\dis \sum_{i\in \S}\mu_i\big(a_1(i)-\frac 12 \alpha^2(i)\big)>0$, then $(\varphi_t,\La_t)$ is positive recurrent
   with stationary distribution $\pi^\varphi$ on $\R_+\times \S$. Assume
   further that
  \begin{equation}\label{lam}
  \lambda:=-\sum_{i\in\S}\mu_i\big(a_2(i)+\frac 12\beta^2(i)\big)+\sum_{i\in\S}\int_{\R^+}\frac{c_2(i)x}{m_1(i)+m_2(i)x}\pi^\varphi(\d
  x,i)<0.
  \end{equation}
  Then, $\dis\lim_{t\ra \infty} Y_t=0$ a.s., $\dis\limsup_{t\ra\infty}X_t>0$ a.s., and the distribution of $(X_t,\La_t)$ converges weakly to $\pi^\varphi$.
  \item[$(iii)$] If\, $\dis\sum_{i\in \S} \mu_i\Big(a_2(i)+\frac 12 \beta^2(i)-\frac{c_2(i)}{m_2(i)}\Big)<0$, then $(\psi_t,\La_t)$ is positive recurrent with stationary distribution $\pi^\psi$.
   Assume further that $\dis \sum_{i\in \S}\mu_i\big(a_1(i)-\frac 12 \alpha^2(i)\big)>0$ and
  \begin{equation}\label{barlam}
  \bar\lambda:=\lambda+\sum_{i\in\S}\mu_i\Big(a_2(i)+\frac 12\beta^2(i)-\frac{c_2(i)}{m_2(i)}\Big)+\sum_{i\in \S}\int_{\R_+}b_2(i)y\pi^\psi(\d
  y,i)>0.
  \end{equation}
  Then, $\dis\limsup_{t\ra \infty} X_t>0$ a.s., $\dis\limsup_{t\ra \infty} Y_t>0$ a.s., and $(X_t,Y_t,\La_t)$ has a stationary distribution.
\end{itemize}
\end{thm}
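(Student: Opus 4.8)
My plan is to establish the three conclusions of $(iii)$ --- positive recurrence of $(\psi_t,\La_t)$, the two permanence statements, and the existence of a stationary law --- in turn, recycling the estimates set up for parts $(i)$--$(ii)$. First I would show $(\psi_t,\La_t)$ is positive recurrent. Since \eqref{psi} is a one--dimensional regime--switching logistic diffusion of exactly the same shape as \eqref{phi}, the argument used for $(\varphi_t,\La_t)$ in part $(ii)$ applies verbatim: by It\^o's formula the averaged drift of $\log\psi_t$ near the origin is $-\sum_i\mu_i\big(a_2(i)+\frac12\beta^2(i)-\frac{c_2(i)}{m_2(i)}\big)$, which is positive by hypothesis and hence repels $\psi$ from $0$, while the term $-b_2(\La_t)\psi_t$ confines it from above; the ergodicity criterion for regime--switching diffusions cited in the paper then yields the unique stationary distribution $\pi^\psi$ and the ergodic averages $\frac1t\int_0^t f(\psi_s,\La_s)\,\d s\to\sum_i\int_{\R_+}f(y,i)\,\pi^\psi(\d y,i)$ a.s. Because the stationary average of the drift of $\log\psi_t$ vanishes, this gives the identity $\sum_i\int_{\R_+}b_2(i)y\,\pi^\psi(\d y,i)=-\sum_i\mu_i\big(a_2(i)+\frac12\beta^2(i)-\frac{c_2(i)}{m_2(i)}\big)$; substituting into \eqref{barlam} shows $\bar\lambda=\lambda$, so that the hypothesis $\bar\lambda>0$ is precisely the reverse of the extinction threshold $\lambda<0$ of part $(ii)$, and the positive recurrence of $\psi$ serves only to make $\bar\lambda$ well defined.

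Next I would prove the two permanence assertions by contradiction. For $\limsup_{t\ra\infty}X_t>0$: on an event where $X_t\to0$ the gain term $\frac{c_2(\La_t)X_t}{m_1(\La_t)+m_2(\La_t)X_t+m_3(\La_t)Y_t}\to0$, so the Lyapunov exponent of $\log Y_t$ is at most $-\sum_i\mu_i\big(a_2(i)+\frac12\beta^2(i)\big)<0$ and thus $Y_t\to0$ as well; but then the predation term $\frac{c_1(\La_t)Y_t}{m_1(\La_t)+m_2(\La_t)X_t+m_3(\La_t)Y_t}\to0$ too, and It\^o applied to $\log X_t$, together with the ergodic theorem for $(\La_t)$ and the martingale strong law, forces $\lim_{t\ra\infty}\frac1t\log X_t=\sum_i\mu_i\big(a_1(i)-\frac12\alpha^2(i)\big)>0$, contradicting $X_t\to0$. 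For $\limsup_{t\ra\infty}Y_t>0$: if $Y_t\to0$, then, exactly as in part $(ii)$, the prey equation becomes a vanishing perturbation of \eqref{phi}, the occupation measures of $(X_t,\La_t)$ converge to $\pi^\varphi$, the self--limitation average $\frac1t\int_0^t b_2(\La_s)Y_s\,\d s\to0$, and the gain term averages to $\sum_i\int_{\R_+}\frac{c_2(i)x}{m_1(i)+m_2(i)x}\pi^\varphi(\d x,i)$, whence $\lim_{t\ra\infty}\frac1t\log Y_t=\lambda=\bar\lambda>0$, contradicting $Y_t\to0$.

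For the stationary distribution of $(X_t,Y_t,\La_t)$ I would first derive uniform moment bounds $\sup_{t\ge0}\E\big(X_t^{p}+Y_t^{p}\big)<\infty$ for some small $p>0$, using It\^o on $X^p$ and $Y^p$ and the dissipativity coming from $-b_1X$ and $-b_2Y$ (the gain term in the $Y$--equation being bounded by $c_2/m_2$). This renders the family of laws of $(X_t,Y_t,\La_t)$ tight on $[0,\infty)^2\times\S$, so the Krylov--Bogoliubov procedure applied to the Ces\`aro averages of the transition probabilities produces an invariant probability measure $\nu$. It then remains to check that $\nu$ charges neither axis: any invariant mass on $\{y=0\}$ is, by uniqueness, of the form $\pi^\varphi\otimes\delta_0$, and on $\{x=0\}$ the only invariant measure is concentrated at the origin; the transversal exponents under these boundary measures equal $\bar\lambda>0$ and $\sum_i\mu_i\big(a_1(i)-\frac12\alpha^2(i)\big)>0$, respectively, so the permanence estimates just obtained make the occupation time of any neighbourhood of $\{xy=0\}$ asymptotically negligible and give $\nu\big(\{xy=0\}\big)=0$. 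Hence $\nu$ is supported in the open quadrant and is the desired stationary distribution.

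I expect the main obstacle to be the stability input inherited from part $(ii)$: rigorously justifying that $Y_t\to0$ forces the occupation measures of $(X_t,\La_t)$ to converge to $\pi^\varphi$, so that the gain--term time average converges to its $\pi^\varphi$--integral. This requires sandwiching $X_t$ between \eqref{phi} and the logistic diffusion carrying the vanishing perturbation $-c_1(\La_t)Y_t/m_1(\La_t)$, and then invoking the asymptotic stability of the regime--switching logistic equation under a perturbation tending to $0$ --- exactly the place where the ergodicity theory for regime--switching diffusions is indispensable, and the same ingredient that underpins the boundary--non--charging step in the construction of $\nu$.
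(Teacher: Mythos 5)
Your plan for part $(iii)$ diverges from the paper at two points, and both are genuine gaps rather than routine details. First, the identity $\sum_{i\in\S}\int_{\R_+}b_2(i)y\,\pi^\psi(\d y,i)=-\sum_{i\in\S}\mu_i\big(a_2(i)+\frac 12\beta^2(i)-\frac{c_2(i)}{m_2(i)}\big)$, from which you conclude $\bar\lambda=\lambda$, is asserted on the heuristic that ``the stationary average of the drift of $\log\psi_t$ vanishes.'' Making this rigorous requires either a two-sided ergodic theorem for the \emph{unbounded} observable $y\mapsto b_2(i)y$, or stationarity of $\E_{\pi^\psi}[\log\psi_t]$, which presupposes $\int|\log y|\,\pi^\psi(\d y,i)<\infty$; neither is established by you, and neither is available in the paper. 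The paper proves only the one-sided truncation bound (\ref{erg-4}) (a liminf inequality), and it states explicitly, in the remark following Theorem \ref{main}, that this quantity can be evaluated only when $N=1$ --- which is exactly why the theorem is formulated in terms of $\bar\lambda$ rather than $\lambda$. Since your whole permanence argument for $(Y_t)$ pivots on ``$\frac 1t\log Y_t\ra\lambda=\bar\lambda>0$,'' this unproven identity is structural, not cosmetic.

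Second --- the gap you yourself call ``the main obstacle'' but do not close --- your contradiction argument for $\limsup_{t\ra\infty}Y_t>0$ needs that, pathwise on the event $\{Y_t\ra 0\}$, the averages $\frac 1t\int_0^t\frac{c_2(\La_s)X_s}{m_1(\La_s)+m_2(\La_s)X_s}\,\d s$ converge to $\sum_{i\in\S}\int_{\R_+}\frac{c_2(i)x}{m_1(i)+m_2(i)x}\pi^\varphi(\d x,i)$. The upper bound follows from $X_t\leq\varphi_t$ and monotonicity, but the matching lower bound requires a vanishing-perturbation stability theorem for regime-switching logistic equations, applied moreover on a conditioning event of positive probability that destroys the Markov property; no such theorem is proved or cited. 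The paper's Proposition \ref{t-6} is engineered precisely to avoid this: it gets (\ref{2.6.3}) from the pointwise comparison $\ln X_t\leq\ln\varphi_t$, gets (\ref{2.6.6}) from (\ref{2.6.2}) together with the one-sided ergodic estimates of Proposition \ref{t-3} and Lemma \ref{t-2}, and then combines the two inequalities linearly (the trick of \cite{DDY}) to obtain (\ref{2.6.7}), $\liminf_{t\ra\infty}\frac 1t\int_0^tY_s\,\d s\geq c\,\bar\lambda>0$, which forces $\limsup_{t\ra\infty}Y_t>0$. That algebraic combination is the missing idea: with it, no stability of the coupled process and no identity for $\bar\lambda$ are needed.

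Two smaller remarks. Your contradiction argument for $\limsup_{t\ra\infty}X_t>0$ (if $X_t\ra 0$ on a set of positive probability then $Y_t\ra 0$ there, whence $\frac 1t\ln X_t\ra\sum_i\mu_i(a_1(i)-\frac 12\alpha^2(i))>0$, a contradiction) is correct, since it only restricts a.s.\ limit theorems to that event; it is even more self-contained than the paper's, which deduces this from $\limsup_{t\ra\infty}Y_t>0$. And for the invariant measure, tightness plus Krylov--Bogoliubov matches the paper's route via moment bounds and \cite{Chen}; but your further claim that the invariant measure does not charge the boundary again leans on the two unproven inputs above, and is in any case more than the theorem asserts.
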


This theorem will be proved in the next section. As we mentioned,
there are few results on permanence for the model \eqref{1.2}
although there have been numerous works on extinction. Whereas,  in
Theorem \ref{main}, we provide a criterion which can justify whether
a prey dies out or not when it lives in a random switching
environment such that it will die out in some fixed environments and
won't die out in others. We shall note that, when $\S$ contains only
one state, and hence there is no switching in (\ref{1.2}) in this
case,  our result will coincide  with  the results in \cite{DDY}.
Actually, according to a similar calculation of \cite[(2.3)]{DDY},
$\sum_{i\in\S}\mu_i\big(a_2(i)+\frac
12\beta^2(i)-\frac{c_2(i)}{m_2(i)}\big)+\sum_{i\in
\S}\int_{\R_+}b_2(i)y\pi^\psi(\d y,i)=0$ in the case $N=1$, hence
$\bar\lambda=\lambda$ in this case. But when $N>1$, we have no way
to calculate the precise value of this term at present stage due to
lack of explicit representation of the invariant measures of
regime-switching diffusion processes. Moreover, we should point out
that \cite{LZ} investigated strong (weak) permanence {\it in the
mean} for the model \eqref{1.1}. Compared with \cite{DDY}, the main
difficulty in the present work is to determine the recurrence
property of stochastic processes $(\varphi_t,\La_t)$ and
$(\psi_t,\La_t)$, which is overcome by using the recent results in
\cite{Sh14b} on justifying the recurrence property of
regime-switching diffusion processes.

The distributions $\pi^\varphi$ and $\pi^\psi$ are stationary distributions of one-dimensional regime-switching
diffusion processes. In \cite{Sh1}, for one-dimensional regime-switching diffusion process, an explicit representation
of the stationary distribution is provided based on the hitting times of this process.

At last, according to the argument of our main result,  the
permanence or non-permanence of $(X_t, Y_t)$ does not depend on the
correlation of the Brownian motions $(B_1(t))$ and $(B_2(t))$. 
  So in the
 present work, we do not impose
any condition on the dependence between $(B_1(t))$ and $(B_2(t))$.

\section{Proof of the main result}
We first investigate the properties of the processes $(\varphi_t,\La_t)$ and $(\psi_t,\La_t)$ including the estimate of their
 moments and recurrence property.
Set
\begin{equation}
  \label{notation}
  \hat a_1:=\min_{i\in \S} a_1(i) ,\ \check{a}_1:=\max_{i\in\S}a_1(i)
\end{equation}
and similarly 
define $\hat a_2$, $\check a_2$, $\hat b_k$,
$\check b_k$, $\hat c_k$, $\check c_k$ for $k=1,2$ and $\hat m_l$,
$\check m_l$ for $l=1,2,3$. By the finiteness of $\S$, all the
parameters introduced here remain positive.

\begin{lem}\label{t-1}
For any $p>1$,
\begin{equation}\label{2.2}
\begin{split}
  \E\varphi_t^p&\leq \Big[(\E\varphi_0^p)^{-\frac 1p}\exp\Big(-\Big(\check
  a_1+\frac{p-1}{2}\check{\alpha}^2\Big)t\Big)\\
 &\quad +\frac{\hat b_1}{\check a_1+\frac{p-1}{2}\check{\alpha}^2}\Big(1-\exp\Big(-\Big(\check a_1+\frac{p-1}{2}\check\alpha^2\Big)t\Big)\Big)\Big]^{-p},
\end{split}
\end{equation}
and
\begin{equation}\label{2.3}
\begin{split}
  \E\psi_t^p&\leq \Big[(\E\psi_0^p)^{-\frac 1p}\exp\Big(-\Big(-\hat a_2+\frac{\check c_2}{\hat m_2}+\frac{p-1}{2}\check \beta^2\Big)t\Big)\\
  &\quad +\frac{\hat b_2}{-\hat a_2+\frac{\check c_2}{\hat m_2} +\frac{p-1}{2}\check \beta^2}\Big(1-\exp\Big(-\Big(-\hat a_2+\frac{\check c_2}{\hat m_2}
  +\frac{p-1}{2}\check \beta^2\Big)t\Big)\Big)\Big]^{-p}.
\end{split}
\end{equation}
Therefore, for any $p>1$,
\begin{equation}\label{2.4}
  \limsup_{t\ra\infty} \E\varphi_t^p\leq \Big(\frac{\check a_1+\frac{p-1}{2}\check\alpha^2}{\hat b_1}\Big)^{p},
\end{equation}
and for any $p>1$ such that $ -\hat a_2+\frac{\check c_2}{\hat m_2}+\frac{p-1}{2}\check\beta^2>0$, it holds
\begin{equation}\label{2.5}
  \limsup_{t\ra\infty}\E\psi_t^p\leq \Big(\frac{-\hat a_2+\frac{\check c_2}{\hat m_2}+\frac{p-1}{2}\check \beta^2}{\hat b_2}\Big)^p.
\end{equation}
\end{lem}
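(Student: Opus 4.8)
The plan is to treat both estimates by the same scheme: for each moment I would derive a scalar Bernoulli-type differential inequality and then linearise it. I will carry out the argument for $\varphi_t$ in detail and note that $\psi_t$ is entirely analogous. Write $V_t=\E\varphi_t^p$ and $c=\check a_1+\frac{p-1}{2}\check\alpha^2$, which is strictly positive since all the parameters are positive. The key observation is that the test function $x\mapsto x^p$ does not depend on the discrete coordinate, so the generator of the regime-switching diffusion $(\varphi_t,\La_t)$ acts on it through its diffusion part alone (the $Q$-matrix contribution cancels); Itô's formula then gives the clean identity
$$
\frac{\d}{\d t}\E\varphi_t^p=p\,\E\Big[\Big(a_1(\La_t)+\frac{p-1}{2}\alpha^2(\La_t)\Big)\varphi_t^p\Big]-p\,\E\big[b_1(\La_t)\varphi_t^{p+1}\big].
$$
Bounding the switching-dependent coefficients by their extremes over the finite set $\S$, namely $a_1(\La_t)\le\check a_1$, $\alpha^2(\La_t)\le\check\alpha^2$ and $b_1(\La_t)\ge\hat b_1$, yields
$$
V_t'\le p\,c\,V_t-p\,\hat b_1\,\E\varphi_t^{p+1}.
$$

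The next step is to close this inequality in $V_t$ alone. Since $(p+1)/p>1$, Jensen's inequality applied to the random variable $\varphi_t^p$ gives $\E\varphi_t^{p+1}\ge(\E\varphi_t^p)^{(p+1)/p}=V_t^{(p+1)/p}$, whence $V_t'\le p\,c\,V_t-p\,\hat b_1\,V_t^{(p+1)/p}$, a Bernoulli differential inequality. I would linearise it through the substitution $W_t=V_t^{-1/p}=(\E\varphi_t^p)^{-1/p}$, which is well defined and positive because $\varphi_t>0$ a.s.\ keeps $V_t\in(0,\infty)$. Computing $W_t'=-\frac1p V_t^{-(p+1)/p}V_t'$ and multiplying the inequality by the \emph{negative} factor $-\frac1p V_t^{-(p+1)/p}$ (which reverses it) converts the Bernoulli inequality into the linear one
$$
W_t'+c\,W_t\ge\hat b_1.
$$
Multiplying by the integrating factor $\e^{ct}$, integrating from $0$ to $t$, and then inverting the substitution by raising to the power $-p$ (which reverses the inequality since $x\mapsto x^{-p}$ is decreasing) produces exactly \eqref{2.2}. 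The estimate \eqref{2.3} follows verbatim with the constant $-\hat a_2+\frac{\check c_2}{\hat m_2}+\frac{p-1}{2}\check\beta^2$ in place of $c$, after bounding the linear coefficient of \eqref{psi} by $-a_2(\La_t)+\frac{c_2(\La_t)}{m_2(\La_t)}\le-\hat a_2+\frac{\check c_2}{\hat m_2}$.

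The main technical point to be careful about is the passage from Itô's formula to the differential inequality for $\E\varphi_t^p$: a priori one does not know that $V_t$ is finite, so the expectation of the stochastic integral need not vanish. I would handle this by the usual localisation, stopping at $\tau_n=\inf\{t:\varphi_t\ge n\ \text{or}\ \varphi_t\le 1/n\}$, deriving the integral form of the inequality for the bounded stopped process $\varphi_{t\wedge\tau_n}$, and then letting $n\to\infty$ via Fatou's lemma; the strong restoring term $-b_1\varphi_t^2$ forces non-explosion and the multiplicative noise keeps the process off the origin, so $\tau_n\to\infty$ a.s. Once \eqref{2.2} and \eqref{2.3} are in hand, the limit estimates \eqref{2.4} and \eqref{2.5} are immediate by letting $t\to\infty$: in \eqref{2.2} the exponential terms vanish because $c>0$, leaving $(\hat b_1/c)^{-p}$, and likewise in \eqref{2.3} under the stated hypothesis $-\hat a_2+\frac{\check c_2}{\hat m_2}+\frac{p-1}{2}\check\beta^2>0$, which is precisely what guarantees the corresponding exponential decay.
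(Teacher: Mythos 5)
Your proposal is correct and follows essentially the same route as the paper: It\^o's formula for $\varphi_t^p$, bounding the switching coefficients by their extrema over $\S$, closing the inequality via $\E\varphi_t^{p+1}\geq(\E\varphi_t^p)^{(p+1)/p}$ (your Jensen step is the paper's H\"older step), and then solving the resulting Bernoulli differential inequality, which the paper dispatches by citing the ODE comparison theorem while you carry out the substitution $W_t=V_t^{-1/p}$ explicitly. Your added localisation argument to justify taking expectations is a welcome technical refinement of the paper's proof, not a departure from it.
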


\begin{proof}
  We shall only prove the estimate for $\varphi_t$ since the estimate for $\psi_t$ can be done in the same way.
  By It\^o's formula,
  \begin{align*}
    \d \varphi_t^p&=p\Big(a_1(\La_t)+\frac{p-1}{2}\alpha^2(\La_t)\Big)\varphi_t^p\d t-pb_1(\La_t)\varphi_t^{p+1}\d t+p\alpha(\La_t)\varphi_t^p\d B_1(t)\\
    &\leq p\Big(\check a_1+\frac{p-1}2\check\alpha^2\Big)\varphi_t^p\d t-p\hat b_1\varphi_t^{p+1}\d t+p\alpha(\La_t)\varphi_t^p\d B_1(t).
  \end{align*}
  Taking the expectation on both sides and utilizing H\"older's inequality yields that
  \begin{equation*}
  \begin{split}
      \frac{\d \E\varphi_t^p}{\d t}&\leq p\Big(\check a_1+\frac{p-1}{2}\check \alpha^2\Big)\E\varphi_t^p-p\hat b_1\E\varphi_t^{p+1}\\
      &\leq p\Big(\check a_1+\frac{p-1}{2}\check \alpha^2\Big)\E\varphi_t^p-p\hat b_1\big(\E\varphi_t^p\big)^{\frac{p+1}{p}}.
    \end{split}
  \end{equation*}
  By the comparison theorem for ordinary  differential equations, we get (\ref{2.2}), and then (\ref{2.4}) follows immediately.
\end{proof}

Note that \eqref{2.5} holds for $p$ sufficiently large. By Lemma \ref{t-1}, the processes $(\varphi_t,\La_t)$ and $(\psi_t,\La_t)$ are
 nonexplosive, and hence the processes $(X_t,\La_t)$ and $(Y_t,\La_t)$ are also nonexplosive 
  due to $\varphi_t\geq
X_t$ a.s. and $\psi_t\geq Y_t$ a.s. for all $t>0$.

\begin{lem}\label{t-0}
It holds that
\begin{equation}\label{positive}
\p(X_t>0,\,\forall\, t>0)=1\quad \text{and}\quad \p(Y_t>0,\,\forall\,t>0)=1.
\end{equation}
\end{lem}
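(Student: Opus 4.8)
The plan is to exploit the multiplicative structure of \eqref{1.2}: in each equation both the drift and the diffusion coefficient carry a factor of the unknown, so each of $X_t$ and $Y_t$ is a stochastic exponential of its starting value and therefore cannot cross zero. Concretely, I would write the first equation as $\d X_t=X_t\big(g_X(t)\,\d t+\alpha(\La_t)\,\d B_1(t)\big)$ with
\[
g_X(t)=a_1(\La_t)-b_1(\La_t)X_t-\frac{c_1(\La_t)Y_t}{m_1(\La_t)+m_2(\La_t)X_t+m_3(\La_t)Y_t},
\]
and argue that if $(X_t,Y_t,\La_t)$ denotes the (non-explosive) solution, then $X_t$ must coincide with $x_0\exp\big(\int_0^t(g_X(s)-\tfrac12\alpha^2(\La_s))\,\d s+\int_0^t\alpha(\La_s)\,\d B_1(s)\big)$, an expression that is strictly positive whenever its exponent is finite.

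The essential preparatory observation is that $g_X$ is almost surely locally bounded. By Lemma \ref{t-1} the processes $(X_t,\La_t)$ and $(Y_t,\La_t)$ are non-explosive, so almost every path $s\mapsto(X_s,Y_s)$ is continuous and finite, hence bounded, on each compact interval. The functional-response term is already globally bounded, since $0\le \frac{c_1(\La_s)Y_s}{m_1(\La_s)+m_2(\La_s)X_s+m_3(\La_s)Y_s}\le \frac{\check c_1}{\hat m_3}$, and $a_1(\La_s),\,\alpha(\La_s)$ take finitely many values; the only term that is not bounded below is $-b_1(\La_s)X_s$, which is controlled precisely by the path boundedness just noted. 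Consequently $\int_0^T\!\big(|g_X(s)|+\alpha^2(\La_s)\big)\,\d s<\infty$ almost surely for every finite $T$.

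To make this rigorous without assuming positivity a priori, I would localize: set $\tau_n=\inf\{t>0:X_t\le 1/n\}$ and $\tau=\lim_{n\to\infty}\tau_n$. On $\{t<\tau_n\}$ one has $X_t\ge 1/n$, so It\^o's formula may be applied to $\ln X_t$ and yields the exponential representation above for all $t<\tau$. Suppose, for contradiction, that $\p(\tau<\infty)>0$. On $\{\tau<\infty\}$, as $t\uparrow\tau$ the left-hand side $X_t$ tends to $0$, whereas the exponent converges to a finite limit, because its drift integrand is bounded on the compact interval $[0,\tau]$ by the previous paragraph and the continuous local martingale $\int_0^t\alpha(\La_s)\,\d B_1(s)$, having finite quadratic variation $\int_0^\tau\alpha^2(\La_s)\,\d s$ there, has an almost sure finite limit. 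Thus the right-hand side stays bounded away from $0$, contradicting $X_t\to 0$; hence $\tau=\infty$ a.s., which is the first assertion in \eqref{positive}. The statement for $Y_t$ follows verbatim, now with $g_Y(t)=-a_2(\La_t)-b_2(\La_t)Y_t+\frac{c_2(\La_t)X_t}{m_1(\La_t)+m_2(\La_t)X_t+m_3(\La_t)Y_t}$ and the bound $0\le \frac{c_2(\La_t)X_t}{m_1(\La_t)+m_2(\La_t)X_t+m_3(\La_t)Y_t}\le \frac{\check c_2}{\hat m_2}$. The only delicate points are the a priori absence of positivity when differentiating the logarithm, which is circumvented by the stopping-time localization, and the control of the $-b_k(\La_t)(\cdot)$ drift terms, which is supplied directly by the non-explosion established in Lemma \ref{t-1}; neither presents a genuine obstacle.
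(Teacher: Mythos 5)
Your proof is correct, but it takes a genuinely different route from the paper's. You apply It\^o's formula to $\ln X_t$ after stopping at $\tau_n=\inf\{t>0:\,X_t\le 1/n\}$ (so positivity is never assumed), obtain the stochastic-exponential representation of $X$, and derive a pathwise contradiction on $\{\tau<\infty\}$, $\tau=\lim_n\tau_n$: the exponent has a finite limit there, because nonexplosion makes the paths bounded on compact intervals (controlling the term $-b_1(\La_s)X_s$), the Beddington--DeAngelis term is globally bounded by $\check c_1/\hat m_3$, and the martingale $\int_0^t\alpha(\La_s)\,\d B_1(s)$ has bounded integrand, hence is continuous and finite at $\tau$; so $X$ cannot reach $0$ at $\tau$. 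The paper instead works with negative moments: it applies It\^o's formula to $X_t^{-p}$, localizes with $\sigma_K=\inf\{t>0:\,X_t\ge K\}$ to tame the drift contribution $\check b_1 K$, uses Gronwall to get $\E X_{t\wedge\sigma_K\wedge\tau_\Delta}^{-p}\le x_0^{-p}\exp\bigl(p(-\hat a_1+\check c_1/\hat m_3+\tfrac{p+1}{2}\check\alpha^2+\check b_1K)t\bigr)$, and then the Chebyshev-type bound $\p(\tau_\Delta<t\wedge\sigma_K)\le \Delta^p\,\E X_{t\wedge\sigma_K\wedge\tau_\Delta}^{-p}$, letting $\Delta\downarrow 0$. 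Both arguments rest on the same two structural facts---nonexplosion (Lemma \ref{t-1} plus the comparison theorem) and the global bound on the functional response---but yours is purely pathwise and more elementary (no moments, no Gronwall), and it has the side benefit of rigorously establishing the logarithmic representation that the paper relies on repeatedly afterwards (the transform $Z_t=\ln\varphi_t$ in Lemma \ref{t-2}, and the $\frac1t\ln X_t$, $\frac1t\ln Y_t$ computations in Propositions \ref{t-4}--\ref{t-6}); the paper's argument is quantitative, yielding explicit negative-moment bounds for the stopped process that could be reused elsewhere. One presentational caveat, which applies equally to the paper's own proof: the bound on the response term in the $X$-equation presupposes $Y_s\ge 0$ (and symmetrically for $Y$), so strictly both arguments should be run up to the first exit of $(X_s,Y_s)$ from the open positive quadrant, ruling out each coordinate vanishing there; this is routine and does not affect the validity of your proof.
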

\begin{proof}
  Here we only consider the process $(X_t)$ while the result for $(Y_t)$ can be dealt with in a similar manner.
  Let \[\tau_\Delta=\inf\{t>0;\ X_t\leq \Delta\},\ \Delta>0; \quad \sigma_K=\inf\{t>0;\ X_t\geq K\},\ \ K\geq 1.\]
  Let $\tau_0=\inf\{t>0;\ X_t=0\}$, then $\tau_{\Delta}\uparrow \tau_0$ as $\Delta\downarrow 0$.
  As $(X_t,\La_t)$ is nonexplosive, $\lim_{K\ra \infty}\sigma_K=\infty$ a.s. By It\^o's formula, for $p>0$ such
  that $-\hat a_1+\frac{\check c_1}{\hat m_3}+\frac{p+1}{2}\check
  \alpha^2>0$, we have
  \begin{align*}
    \E X_{t\wedge \sigma_K\wedge\tau_{\Delta}}^{-p}&=x_0^{-p}+p\E\int_0^{t\wedge \sigma_K\wedge \tau_{\Delta}}\!\!\Big(-a_1(\La_s)+\frac{p+1}{2}\alpha^2(\La_s)\\&
    \qquad\qquad\qquad+\frac{c_1(\La_s)Y_s}{m_1(\La_s)
    +m_2(\La_s)X_s+m_3(\La_s)Y_s}+b_1(\La_s)X_s\Big)X_s^{-p} \d s\\
    &\leq x_0^{-p}+p\E\int_0^{t\wedge \sigma_K\wedge\tau_{\Delta}}\!\!\Big(-\hat a_1+\frac{\check c_1}{\hat m_3}
    +\frac{p+1}{2}\check \alpha^2+\check b_1 K\Big)X_s^{-p}\d s.
  \end{align*}
  By Gronwall's inequality,
  \begin{equation}\label{2.0.1}
  \E X_{t\wedge \sigma_K\wedge\tau_{\Delta}}^{-p}\leq x_0^{-p}\exp\Big(p\Big(-\hat a_1 +\frac{\check c_1}
  {\hat m_3}+\frac{p+1}2\check\alpha^2+\check b_1K\Big)t\Big).
  \end{equation}
  If $\p(\tau_0<\infty)>0$,  we can choose $t,\,K$ large enough so that $\p(\tau_0<t\wedge\sigma_K)>0$.
  Then for any $\Delta>0$,
  \begin{align*}
    0&<\p(\tau_0<t\wedge \sigma_K)\leq \p(\tau_\Delta<t\wedge \sigma_K)\\
    &\leq \Delta^p\E[X_{t\wedge \sigma_K\wedge\tau_{\Delta}}^{-p}\mathbf 1_{\tau_\Delta<t\wedge \sigma_K}]\leq \Delta^p\E[X_{t\wedge \sigma_K\wedge\tau_{\Delta}}^{-p}]\\
    &\leq \Delta^px_0^{-p}\exp\Big(p\Big(-\hat a_1 +\frac{\check c_1}
  {\hat m_3}+\frac{p+1}2\check\alpha^2+\check b_1K\Big)t\Big).
  \end{align*}
  Taking $\Delta\downarrow 0$, we get a contradiction. Hence $\p(\tau_0<\infty)=0$ and we complete the proof.
\end{proof}

Next, we go to study the recurrence property of the processes
$(\varphi_t,\La_t)$ and $(\psi_t,\La_t)$, which plays a fundamental
role in the proof of our Theorem \ref{main}.

\begin{lem}[Key lemma]\label{t-2}
\begin{itemize}
\item[$\mathrm{(i)}$]
When $\sum_{i\in \S}\mu_i(a_1(i)-\frac 12 \alpha^2(i))>0$,   the process $(\varphi_t,\La_t)$ is positive recurrent and has a unique stationary distribution $\pi^\varphi$, which is a probability measure on $(0,\infty)\times \S$. When $\sum_{i\in\S}\mu_i(a_1(i)-\frac 12\alpha^2(i))<0$, the process $(\varphi_t,\La_t)$ is transient.

\item[$\mathrm{(ii)}$] When  $\sum_{i\in \S}\mu_i(a_2(i)+\frac 12\beta^2(i)-\frac{c_2(i)}{m_2(i)})<0$,   the process $(\psi_t,\La_t)$ is positive recurrent and has a unique stationary distribution $\pi^\psi$, which is a probability measure on $(0,\infty)\times \S$.
    When $\sum_{i\in\S}\mu_i(a_2(i)+\frac 12\beta^2(i)-\frac{c_2(i)}{m_2(i)})>0$, the process $(\psi_t,\La_t)$ is transient.
\end{itemize}
\end{lem}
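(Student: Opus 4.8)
The plan is to treat $(\varphi_t,\La_t)$ and $(\psi_t,\La_t)$ as one-dimensional regime-switching diffusions on $(0,\infty)\times\S$ and to determine their type from the recurrence criterion of Shao \cite{Sh14b}. Each equation has logistic form: a linear per-capita growth term whose $\mu$-averaged sign near the extinction boundary $\{0\}$ governs survival, plus a damping term $-b_k(\La_t)(\,\cdot\,)$ that confines the process away from $+\infty$. The natural device is the logarithmic change of variable $\xi_t=\ln\varphi_t$ (resp.\ $\ln\psi_t$), which turns each process into a regime-switching diffusion on $\R\times\S$ with additive noise; recurrence on $(0,\infty)\times\S$ is then equivalent to recurrence of $(\xi_t,\La_t)$, and the dichotomy is decided by the averaged drift at the two ends $\pm\infty$. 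Non-explosion and strict positivity, which legitimize the transform and keep any invariant law on $(0,\infty)\times\S$, are already supplied by Lemma \ref{t-1} and Lemma \ref{t-0}.

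For part $\mathrm{(i)}$, It\^o's formula gives
\[\d\ln\varphi_t=\Big(a_1(\La_t)-\tfrac12\alpha^2(\La_t)-b_1(\La_t)\varphi_t\Big)\d t+\alpha(\La_t)\,\d B_1(t).\]
As $\varphi_t\ra0$ the damping term drops out, so the quantity that \cite{Sh14b} attaches to the lower end is $\theta_\varphi:=\sum_{i\in\S}\mu_i\big(a_1(i)-\tfrac12\alpha^2(i)\big)$, while at the upper end the term $-b_1(\La_t)\varphi_t$ drives the drift to $-\infty$ and yields a confining Lyapunov function that rules out escape to $+\infty$. The criterion then gives the dichotomy: when $\theta_\varphi>0$ the process cannot approach $-\infty$, so $(\varphi_t,\La_t)$ is positive recurrent and, by irreducibility of $Q$, has a unique stationary distribution $\pi^\varphi$ on $(0,\infty)\times\S$; when $\theta_\varphi<0$ one has $\xi_t\ra-\infty$, i.e.\ $\varphi_t\ra0$, which is exactly transience.

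Part $\mathrm{(ii)}$ is handled by the same scheme applied to
\[\d\ln\psi_t=\Big(-a_2(\La_t)+\tfrac{c_2(\La_t)}{m_2(\La_t)}-\tfrac12\beta^2(\La_t)-b_2(\La_t)\psi_t\Big)\d t+\beta(\La_t)\,\d B_2(t),\]
where $-b_2(\La_t)\psi_t$ again confines the upper end and the threshold that \cite{Sh14b} attaches to the boundary $\{0\}$ is $\theta_\psi:=\sum_{i\in\S}\mu_i\big(a_2(i)+\tfrac12\beta^2(i)-\tfrac{c_2(i)}{m_2(i)}\big)$. Running the criterion exactly as in part $\mathrm{(i)}$ then gives positive recurrence of $(\psi_t,\La_t)$, with a unique stationary distribution $\pi^\psi$ on $(0,\infty)\times\S$, precisely when $\theta_\psi>0$, and transience when $\theta_\psi<0$, as asserted.

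The crux of the argument is the implication ``sign of the boundary threshold $\Rightarrow$ recurrence type'' itself. Because $\alpha(\cdot)$ and $\beta(\cdot)$ are genuinely state dependent and the drift is nonlinear in the spatial variable, one cannot reduce the problem to a single scale function; instead one must verify the structural hypotheses of \cite{Sh14b} and construct the accompanying Lyapunov functions near each boundary -- a confining function at $+\infty$ coming from the logistic damping, and a test function at $\{0\}$ that detects the sign of the relevant threshold ($\theta_\varphi$, resp.\ $\theta_\psi$) and thereby separates recurrence from transience. The remaining point is to ensure that the stationary measure does not charge the boundary $\{0\}$, for which the strict positivity of Lemma \ref{t-0} together with the non-explosion and moment bounds of Lemma \ref{t-1} are exactly what is needed; uniqueness of $\pi^\varphi$ and $\pi^\psi$ then follows from the irreducibility of $(\La_t)$ and the Markov property of the pair.
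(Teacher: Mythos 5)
Your skeleton coincides with the paper's: pass to $Z_t=\ln\varphi_t$ (legitimized by Lemma \ref{t-0}), read the recurrence type off the $\mu$-averaged drift $\theta_\varphi=\sum_{i\in\S}\mu_i(a_1(i)-\frac12\alpha^2(i))$, and invoke the criteria of \cite{Sh14b}. For the positive-recurrence half this is essentially what the paper does, and your sketch is acceptable modulo the explicit verification: the paper takes $h(x)=x^2$, $g(x)=|x|$, checks $L^{(i)}h(x)\leq -2(\beta_i-\veps)g(x)$ for $|x|>r_0$ (the logistic term $-b_1(i)\e^{x}$ only helps on the right half-line), and then \cite[Theorem 3.1]{Sh14b} -- whose hypothesis is precisely the averaged condition $-2\sum_i\mu_i(\beta_i-\veps)<0$ -- manufactures the Lyapunov function $V$ with $\mathscr A V<-1$ outside a compact set; uniqueness of $\pi^\varphi$ then comes from \cite[Theorem 4.3, pp.114]{YZ}, not merely from irreducibility of $Q$.

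The genuine gap is the transience half, which you dispose of with ``the criterion then gives the dichotomy: when $\theta_\varphi<0$ one has $\xi_t\ra-\infty$, i.e.\ transience.'' The paper explicitly does \emph{not} apply the criterion of \cite{Sh14b} here, precisely because the term $-b_1(i)\e^{x}$ makes the drift of $Z_t$ behave completely differently at the two ends: superlinearly negative as $x\ra+\infty$, but asymptotically the constants $\beta_i$ as $x\ra-\infty$, so no off-the-shelf two-sided criterion applies and a one-sided construction is required. Concretely, the paper uses the Fredholm alternative (cf.\ \cite{PP}): since $\sum_i\mu_i(\beta_i+\veps)<0$, the system $Q\bm\xi(i)=-\kappa-\beta_i-\veps$ is solvable with $\kappa>0$, and with $h(x)=1/|x|$, $g=h'$, the function $V(x,i)=h(x)+\xi_i g(x)$ satisfies $\mathscr A V(x,i)\leq 0$ on $(-\infty,-r_2]$; Dynkin's formula applied at $t\wedge\tau_{-K}\wedge\tau$, together with the monotonicity of $x\mapsto h(x)+\xi_{\min}g(x)$ on $(-\infty,-r_2]$, the choice \eqref{2.2.3} of the starting point, and the fact $\tau_{-K}\ra\infty$ a.s.\ (from Lemma \ref{t-1} / non-explosion -- another point your sketch omits), yields $\p(\tau=\infty)>0$ for the return time $\tau$ to $-r_2$. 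That positive-probability non-return statement is what transience means; your stronger assertion $\xi_t\ra-\infty$ is neither proved by your argument nor needed. Your closing paragraph concedes that one ``must construct'' the test function detecting the sign of $\theta_\varphi$ near the boundary $\{0\}$, but that construction is the entire content of the lemma in the subcritical case, so as it stands the proposal proves only half of the stated dichotomy.
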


\begin{proof}
  (1) We shall apply the method adopted in the argument of \cite[Theorem 3.1]{Sh14b} to prove this lemma. As the diffusion coefficient of $\varphi_t$ is degenerate at
  the point $x=0$, we use the transform $Z_t=\ln \varphi_t$. By Lemma \ref{t-0}, this transform makes sense for all $t\geq 0$
  a.s.
  Then $Z_t$ satisfies the following SDE:
  \[\d Z_t=(a_1(\La_t)-\frac 12 \alpha^2(\La_t)-b_1(\La_t)\e^{Z_t})\d t+\alpha(\La_t)\d B_1(t).\]
  The recurrence property of $(\varphi_t, \La_t)$ is equivalent to that of $(Z_t,\La_t)$.  For each $i\in \S$, set
  $L^{(i)}:=(a_1(i)-\frac12 \alpha^2(i)-b_1(i) \e^x)\frac{\d}{\d x}+\frac12 \alpha^2(i)\frac{\d^2}{\d x^2}$.
  Then the operator $\mathscr A$ defined by
  \[\mathscr A f(x,i)=L^{(i)} f(\cdot,i)(x)+\sum_{j\neq i} q_{ij}(f(x,j)-f(x,i)),\quad f\in C^2(\R\times \S)\]
  is the infinitesimal generator of the process $(Z_t,\La_t)$.

  Set $\beta_i:=a_1(i)-\frac 12 \alpha^2(i)$, $i\in\S$. We first show the positive recurrence of $(Z_t,\La_t)$.
  Take $h(x)=x^2$ and $g(x)=|x|$. Then
  \begin{equation}\label{Lya-1}
  L^{(i)} h(x)=2\big(\beta_i-b_1(i)\e^x+\frac 12 \alpha^2(i)x^{-1}\big)x,\quad \text{for $|x|>0$}.
  \end{equation}
  Note that $\lim_{x\ra +\infty} -b_1(i) \e^x=-\infty$, and  $\lim_{x\ra -\infty} -b_1(i)\e^x=0$.
  Due to $\sum_{i\in\S}\mu_i\beta_i>0$, there exist  $\veps>0$ and  $r_0>0$ such that $\sum_{i\in\S}\mu_i(\beta_i-\veps)>0$ and
  \begin{equation}\label{2.2.1}
   2(\beta_i-b_1(i) \e^x+\frac 12\alpha^2(i)x^{-1})x\leq -2(\beta_i-\veps)g(x),\quad \text{for $x>r_0$},\ i\in \S.
  \end{equation}
  \begin{equation}\label{2.2.-1}
  2(\beta_i-b_1(i)\e^x+\frac 12 \alpha^2(i)x^{-1})x\leq -2(\beta_i-\veps)g(x),\quad \text{for $x<-r_0$}, \ i\in\S.
  \end{equation}
 Thanks to $- \sum_{i\in\S}\mu_i(\beta_i-\veps)<0$, by the Fredholm alternative (cf. \cite[pp.434]{PP}), there exist a constant $\kappa>0$ and a vector $\bm{\xi}$ such that
  \[Q\bm{\xi}(i)=-\kappa+2(\beta_i-\veps),\ \ i\in\S.\]
  Set $V(x,i):=h(x)+\xi_i g(x)$. We have
  \begin{align*}
    \mathscr A V(x,i)&=L^{(i)} h(x)+\xi_i L^{(i)} g(x)+Q\bm{\xi}(i) g(x)\\
    &\leq \Big(-2(\beta_i-\veps) +Q\bm{\xi}(i)+\frac{L^{(i)} g(x)}{g(x)}\xi_i\Big) g(x)\\
    &=\Big(-\kappa+\frac{L^{(i)} g(x)}{g(x)}\xi_i\Big)g(x),\quad |x|>r_0.
  \end{align*}
  As $\lim_{x\ra+\infty}\frac{L^{(i)}g(x)}{g(x)}=-\infty$, $\lim_{x\ra -\infty}\frac{L^{(i)}g(x)}{g(x)}=0$, and $\lim_{|x|\ra +\infty}g(x)=+\infty$, there exists a constant $r_1>r_0$ such that
  \[\mathscr A V(x,i)<-1,\quad |x|>r_1,\ i\in\S.\]
  Consequently, by taking $\lim_{|x|\ra \infty} h(x)=\infty$ into consideration,
  $(Z_t,\La_t)$  and hence $(\varphi_t,\La_t)$ are positive recurrent. By
  \cite[Theorem 4.3, pp.114]{YZ},
 $(Z_t,\La_t)$ has a unique stationary distribution   on $\R\times \S$. So the stationary distribution $\pi^\varphi$ of $(\varphi_t,\La_t)$ is a probability measure on $(0,\infty)\times \S$.

  Next, we prove the transience of $(Z_t,\La_t)$ under the condition that $\sum_{i\in\S}\mu_i\beta_i<0$. Since the behavior of $Z_t$, caused by
  the term $-b_1(i)e^x$, is quite different on $(0,+\infty)$ and
  $(-\infty,0)$,
  we prove
   the transience property  by it's definition.

  Let $h(x)=\frac{1}{|x|}$ and $g(x)=h'(x)$ for $|x|>0$.
  Then
  \begin{align*}
    L^{(i)} h(x)=\big(\beta_i-b_1(i)\e^x-\frac{\alpha^2(i)}{x}\big)\frac1{x^2},\quad\text{for}\ x<0,\ i\in\S.
  \end{align*}
In view of
  \[\lim_{x\ra -\infty}\Big( \beta_i-b_1(i)\e^x-\frac{\alpha^2(i)}{x}\Big)=\beta_i,\]
  there exist $\veps>0$, $r_1>0$ so that
  \[\sum_{i\in\S}\mu_i(\beta_i+\veps)<0,\quad \text{and}\ \beta_i-b_1(i)\e^x-\frac{\alpha^2(i)}{x}<\beta_i+\veps,\ \forall\,x<-r_1, \ i\in\S.
  \]
  Owing to  $\sum_{i\in\S} \mu_i(\beta_i+\veps)<0$, the Fredholm alternative yields that there exist a constant $\kappa>0$ and a vector $\bm\xi$ such that \[Q\bm\xi(i)=-\kappa-\beta_i-\veps.\]
  Set $V(x,i):=h(x)+\xi_ig(x)$ for $x<0$. We derive that
  \begin{align*}
    \mathscr A V(x,i)&=L^{(i)} h(x)+\xi_iL^{(i)} g(x)+Q\bm\xi(i)g(x)\\
    &\leq \Big(\beta_i+\veps+\xi_i\frac{L^{(i)} g(x)}{g(x)}+Q\bm\xi(i)\Big) g(x)\\
    &=\Big(-\kappa+\xi_i\frac{L^{(i)} g(x)}{g(x)}\Big) g(x),\quad \text{for}\ x<-r_1,\ i\in \S.
  \end{align*}
  It is easy to see that $\lim_{x\ra -\infty} \frac{L^{(i)} g(x)}{g(x)}=0$. Denote by $\xi_{\min}:=\min\{\xi_i;\ i \in \S\}$ and $\xi_{\max}:=\max\{\xi_i;\ i\in \S\}$. Hence, there exists a constant $r_2$ with $r_2>r_1>0$ such that  $h(x)+\xi_{\min}g(x)$ is an increasing function on $(-\infty, -r_2]$ and
  \[\mathscr A V(x,i)\leq 0,\quad \text{for}\ x\leq -r_2,\ i\in \S.\]
  Now, take $Z_0=z<-r_2<0$ such that
  \begin{equation}\label{2.2.3}h(z)+\xi_{\max}g(z)<h(-r_2)+\xi_{\min}g(-r_2).\end{equation} Set
  \[\tau_{-K}:=\inf\{t>0; Z_t=-K\},\quad \tau:=\inf\{t>0; Z_t=-r_2\}.\]
  By Dynkin's formula, we have
  \begin{align*}
    \E[V(Z_{t\wedge\tau_{-K}\wedge\tau},\La_{t\wedge\tau_{-K}\wedge\tau})]&=V(z,i_0)
    +\E\int_0^{t\wedge\tau_{-K}\wedge\tau}\mathscr AV(Z_s,\La_s)\d s\\
    &\leq V(z,i_0),
  \end{align*} where  $\La_0=i_0$.
  Letting $t\ra \infty$, we get
  \[\E[V(-K,\La_{\tau_{-K}})\mathbf 1_{\tau_{-K}\leq \tau}]+\E[V(-r_2,\La_{\tau})\mathbf 1_{\tau<\tau_{-K}}]\leq V(z,i_0),\]
  which further yields  that
  \begin{gather*}
    (h(-K)+\xi_{\min}g(-K))\p(\tau\geq \tau_{-K})+(h(-r_2)+\xi_{\min}g(-r_2))\p(\tau<\tau_{-K})\leq V(z,i_0),\\
    \p(\tau\geq \tau_{-K})\geq \frac{h(z)+\xi_{\max}g(z)-h(-r_2)-\xi_{\min}g(-r_2)}
    {h(-K)+\xi_{\min}g(-K)-h(-r_2)-\xi_{\min}g(-r_2)}>0,
  \end{gather*}
  where in the last step we have used the increasing property of  $x\mapsto h(x)+\xi_{\min} g(x)$ on $(-\infty,-r_2]$ and \eqref{2.2.3}.
  Due to Lemma \ref{t-0}, and $\varphi_t\geq X_t $ a.s., we obtain $\tau_{-K}\ra \infty$ a.s. as $K\ra \infty$. Consequently, by passing to the limit as $K\ra\infty$, the previous inequality yields
  \[\p(\tau=\infty)>0,\]
  which implies that the process $(Z_t,\La_t)$ is transient, hence $(\varphi_t,\La_t)$ is transient too.

  (2) The results of $(\psi_t,\La_t)$ can be proved by the method analogous to that used above for the process $(\psi_t,\La_t)$, which is omitted.
\end{proof}

\begin{prop}[Strong ergodicity theorem]\label{t-3}
 Assume that $(\varphi_t,\La_t)$ and $(\psi_t,\La_t)$ are positive recurrent. Let $f$ be a bounded measurable function on $\R\times \S$. Then
 almost surely
 \begin{equation}\label{erg-1}
 \lim_{t\ra \infty}\frac 1t\int_0^tf(\varphi_s,\La_s)\d s=\sum_{i\in\S}\int_{\R_+}\!f(x,i)\pi^\varphi(\d x,i),
 \end{equation}
 and
 \begin{equation}\label{erg-2}
 \lim_{t\ra \infty}\frac 1t\int_0^tf(\psi_s,\La_s)\d s=\sum_{i\in\S}\int_{\R_+}\!f(y,i)\pi^\psi(\d y,i).
 \end{equation}
 Moreover,
 \begin{equation}\label{erg-3}
 \liminf_{t\ra \infty} \frac 1t \int_0^t b_1(\La_s)\varphi_s\,\d s\geq \sum_{i\in\S}\int_{\R_+}\!b_1(i)x\,\pi^\varphi(\d x,i), \quad a.s.,
 \end{equation}
 and \begin{equation}\label{erg-4}
 \liminf_{t\ra \infty}\frac 1t\int_0^tb_2(\La_s)\psi_s\,\d s\geq \sum_{i\in\S}\int_{\R_+}\!b_2(i)y\,\pi^\psi(\d y,i),\quad a.s.
 \end{equation}
\end{prop}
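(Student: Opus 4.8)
The plan is to establish the ergodic averages \eqref{erg-1}–\eqref{erg-2} first, and then deduce the lower bounds \eqref{erg-3}–\eqref{erg-4} by a truncation argument that circumvents the unboundedness of the integrand $b_1(i)x$.

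For \eqref{erg-1}, I would invoke the general strong law of large numbers for positive recurrent regime-switching diffusions. By Lemma \ref{t-2}, under the standing assumption $(\varphi_t,\La_t)$ is positive recurrent with the unique invariant probability measure $\pi^\varphi$ on $(0,\infty)\times\S$; passing to $Z_t=\ln\varphi_t$ as in the proof of Lemma \ref{t-2}, the process $(Z_t,\La_t)$ is a positive recurrent Markov process with invariant measure the pushforward of $\pi^\varphi$. The standard ergodic theorem for such processes (cf.\ the Birkhoff-type theorem in \cite{YZ} or the results underlying \cite{Sh14b}) gives, for any bounded measurable $f$, that the time average $\frac1t\int_0^t f(\varphi_s,\La_s)\,\d s$ converges almost surely to the space average $\sum_{i\in\S}\int_{\R_+}f(x,i)\,\pi^\varphi(\d x,i)$. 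The argument for $(\psi_t,\La_t)$ and \eqref{erg-2} is identical. I would present this as a direct citation rather than reprove the ergodic theorem.

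For the lower bounds, the obstacle is that $f(x,i)=b_1(i)x$ is \emph{not} bounded, so \eqref{erg-1} does not apply directly. The remedy is truncation. For each $M>0$ set $f_M(x,i)=b_1(i)(x\wedge M)$, which is bounded and measurable, so \eqref{erg-1} yields
\begin{equation*}
\lim_{t\ra\infty}\frac1t\int_0^t b_1(\La_s)(\varphi_s\wedge M)\,\d s
=\sum_{i\in\S}\int_{\R_+}b_1(i)(x\wedge M)\,\pi^\varphi(\d x,i),\quad a.s.
\end{equation*}
Since $\varphi_s\geq \varphi_s\wedge M$ pointwise, we obtain
\begin{equation*}
\liminf_{t\ra\infty}\frac1t\int_0^t b_1(\La_s)\varphi_s\,\d s
\geq \sum_{i\in\S}\int_{\R_+}b_1(i)(x\wedge M)\,\pi^\varphi(\d x,i)
\end{equation*}
for every fixed $M$. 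Letting $M\ra\infty$ and applying the monotone convergence theorem to the right-hand side produces the full integral $\sum_{i\in\S}\int_{\R_+}b_1(i)x\,\pi^\varphi(\d x,i)$, which is exactly \eqref{erg-3}. The same truncation gives \eqref{erg-4}. I should note that the moment bound \eqref{2.4} of Lemma \ref{t-1} guarantees $\pi^\varphi$ has a finite first moment (indeed finite moments of all orders $p>1$), so the limiting integral is finite and the statement is nonvacuous; the analogous bound \eqref{2.5} plays the same role for $\pi^\psi$, valid since one may choose $p>1$ small enough that $-\hat a_2+\frac{\check c_2}{\hat m_2}+\frac{p-1}{2}\check\beta^2>0$.

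The main subtlety I expect is the justification of the underlying ergodic theorem itself in the regime-switching setting: one needs that the positive recurrence from Lemma \ref{t-2} upgrades to almost-sure convergence of time averages, which relies on the Harris-type ergodicity for $(\varphi_t,\La_t)$ established via the Foster–Lyapunov condition $\mathscr A V<-1$ appearing in that lemma. Everything after that — the truncation and the monotone convergence passage — is routine, so the proof is short once the ergodic theorem is cited cleanly.
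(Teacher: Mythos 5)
Your proposal is correct, and the second half (the lower bounds \eqref{erg-3}--\eqref{erg-4}) is exactly the paper's argument: apply the ergodic limit to the truncation $b_1(i)(x\wedge M)$, use $\varphi_s\geq \varphi_s\wedge M$ to pass to a $\liminf$, then let $M\ra\infty$ by monotone convergence. Where you genuinely diverge is in \eqref{erg-1}--\eqref{erg-2}: you outsource the almost-sure convergence of time averages to a general ergodic theorem for positive recurrent regime-switching diffusions (after the transform $Z_t=\ln\varphi_t$ removes the degeneracy at $x=0$), whereas the paper proves it from scratch by a regenerative (cycle) argument in the spirit of \cite[Theorem 3.16, pp.46]{S89}: fix two states $(x,i)\neq (y,j)$; positive recurrence gives finite expected return times; the successive cycle lengths $\zeta_k$ (returns to $(x,i)$ after visiting $(y,j)$) are i.i.d.\ with finite mean by the strong Markov property; the integral $\int_0^t f(\varphi_s,\La_s)\,\d s$ decomposes into i.i.d.\ pieces $\eta_l$ with $\E|\eta_l|\leq \|f\|\,\E_{(x,i)}\zeta_1$; the classical strong law of large numbers plus a sandwich argument gives convergence to $(\E_{(x,i)}\zeta_1)^{-1}\E_{(x,i)}\eta_1$, which is then identified with $\sum_{i}\int f\,\d\pi^\varphi$ through the Ces\`aro convergence of $\frac 1t\int_0^t \E_{(x,i)}f(\varphi_s,\La_s)\,\d s$. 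The paper's route buys self-containedness — nothing is needed beyond finite mean return times and the strong Markov property, so no hypotheses of a black-box theorem must be verified; your route buys brevity, but to be airtight you must confirm that the cited theorem covers all bounded \emph{measurable} (not merely continuous) $f$ and applies to $(Z_t,\La_t)$, which it does here since after the transform the diffusion coefficient $\alpha(\La_t)$ is nondegenerate and the drift is smooth. One minor remark: your observation that \eqref{2.4} gives $\pi^\varphi$ a finite first moment is a pleasant extra but is not needed, since \eqref{erg-3} remains valid (and is proved the same way) even if the right-hand side were infinite.
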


\begin{proof}
Similar to \cite[Theorem 4.4]{YZ}, one can use the idea of \cite[Theorem 3.16, pp.46]{S89} to prove \eqref{erg-1} and \eqref{erg-2} for regime-switching diffusion processes.

  Applying (\ref{erg-1}) to the function $f(z,k)=b_1(k)\min\{z,M\}$ for a positive constant $M$, we get
  \[\lim_{t\ra \infty}\frac 1t \int_0^tb_1(\La_s)\min\{\varphi_s,M\}\d s=\sum_{k\in \S}\!\int_{\R_+}\!b_1(k)\min\{z,M\}\pi^\varphi(\d z,k) \quad a.s.\]
  Then
  \begin{align*}
    \liminf_{t\ra \infty}\frac 1t\int_0^tb_1(\La_s)\varphi_s\d s&\geq \lim_{t\ra\infty}\frac 1t\int_0^tb_1(\La_s)\min\{\varphi_s,M\}\d s\\
    &=\sum_{k\in\S}\!\int_{\R_+}\!b_1(k)\min\{z,M\}\pi^\varphi(\d z,k)\quad a.s.
  \end{align*}
  Letting $M$ tend to $\infty$ in the previous inequality, we obtain (\ref{erg-3}). The proof is   completed.
\end{proof}

After the preparations of the above results on the auxiliary
processes $(\varphi_t)$ and $(\psi_t)$, we are ready to prove our
main result. As the proof of Theorem \ref{main} is a little bit
long, we divide it into three propositions.
\begin{prop}\label{t-4}
If\, $\dis\sum_{i\in\S}\mu_i(a_1(i)-\frac 12\alpha^2(i))<0$, then almost surely $\dis\lim_{t\ra\infty} X_t=0$  and $\dis\lim_{t\ra \infty} Y_t=0$.
\end{prop}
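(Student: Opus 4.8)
The plan is to exploit the comparison inequality $X_t\le\varphi_t$ together with a direct analysis of the exponential growth rate $\frac1t\ln\varphi_t$, and then to feed the resulting decay of $X_t$ into the logarithmic equation for $Y_t$. Note that the hypothesis $\sum_{i\in\S}\mu_i(a_1(i)-\frac12\alpha^2(i))<0$ is exactly the transience condition for $(\varphi_t,\La_t)$ from Lemma \ref{t-2}(i), which already suggests $\varphi_t\ra0$; rather than invoking transience abstractly, I would prove the stronger quantitative statement that $\varphi_t$ decays exponentially, since this transfers cleanly to the bound for $X_t$.

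First I would apply It\^o's formula to $\ln\varphi_t$, which is legitimate because $\varphi_t>0$ for all $t$ by Lemma \ref{t-0}, to obtain
\[
\frac{\ln\varphi_t}{t}=\frac{\ln\varphi_0}{t}+\frac1t\int_0^t\Big(a_1(\La_s)-\tfrac12\alpha^2(\La_s)\Big)\d s-\frac1t\int_0^tb_1(\La_s)\varphi_s\,\d s+\frac1t\int_0^t\alpha(\La_s)\,\d B_1(s).
\]
On the right-hand side the first average converges a.s.\ to $\sum_{i\in\S}\mu_i(a_1(i)-\frac12\alpha^2(i))<0$ by the ergodic theorem for the finite irreducible chain $(\La_t)$; the second term is nonpositive since all factors are positive; and the martingale term tends to $0$ a.s.\ by the strong law of large numbers for continuous local martingales, its quadratic variation $\int_0^t\alpha^2(\La_s)\,\d s$ growing linearly. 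Hence $\limsup_{t\ra\infty}\frac1t\ln\varphi_t<0$ a.s., so $\varphi_t\ra0$ a.s., and therefore $0<X_t\le\varphi_t\ra0$ a.s.

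For the predator I would apply It\^o's formula to $\ln Y_t$ and control the functional-response term by the elementary bound $\frac{c_2(\La_s)X_s}{m_1(\La_s)+m_2(\La_s)X_s+m_3(\La_s)Y_s}\le\frac{\check c_2}{\hat m_1}X_s$, which after dropping the nonpositive term $-\frac1t\int_0^tb_2(\La_s)Y_s\,\d s$ gives
\[
\frac{\ln Y_t}{t}\le\frac{\ln Y_0}{t}+\frac1t\int_0^t\Big(-a_2(\La_s)-\tfrac12\beta^2(\La_s)\Big)\d s+\frac{\check c_2}{\hat m_1}\cdot\frac1t\int_0^tX_s\,\d s+\frac1t\int_0^t\beta(\La_s)\,\d B_2(s).
\]
The first average converges a.s.\ to $-\sum_{i\in\S}\mu_i(a_2(i)+\frac12\beta^2(i))<0$ since all parameters are positive; the Ces\`aro average $\frac1t\int_0^tX_s\,\d s\ra0$ a.s.\ because $X_s\ra0$ a.s.; and the martingale term again vanishes. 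Thus $\limsup_{t\ra\infty}\frac1t\ln Y_t<0$ a.s., whence $Y_t\ra0$ a.s.

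The It\^o computations and the sign bookkeeping are routine. The step needing the most care is the a.s.\ vanishing of the two stochastic integrals: I would justify it by the standard strong law for a continuous local martingale $M$ with $\langle M\rangle_t\ra\infty$, observing that here $\hat\alpha^2 t\le\langle M\rangle_t\le\check\alpha^2 t$ grows linearly, so that $M_t/t=(M_t/\langle M\rangle_t)(\langle M\rangle_t/t)\ra0$ a.s. A secondary point is the implication $X_s\ra0\Rightarrow\frac1t\int_0^tX_s\,\d s\ra0$, which I would verify by splitting the integral at a time beyond which $X_s$ is uniformly small, using continuity of $X$ to make the fixed initial window negligible after division by $t$.
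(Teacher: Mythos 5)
Your proof is correct and follows essentially the same route as the paper: Itô's formula on the logarithms, the ergodic theorem for $(\La_t)$ plus the martingale strong law to get $\limsup_{t\ra\infty}\frac1t\ln X_t\leq\sum_{i\in\S}\mu_i(a_1(i)-\frac12\alpha^2(i))<0$, and then the Ces\`aro argument feeding $X_s\ra 0$ into the bound for $\frac1t\ln Y_t$. The only difference is cosmetic: you route the first step through the already-defined comparison process $\varphi_t$ (dropping the $-b_1\varphi_s$ term), whereas the paper bounds $\d\ln X_t$ directly by an auxiliary process $\tilde\varphi_t$ with both negative drift terms discarded; the two are interchangeable.
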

\begin{proof}
  By It\^o's formula, we have
  \begin{align*}
    \d \ln X_t&=\Big(a_1(\La_t)-\frac 12 \alpha^2(\La_t)-b_1(\La_t)X_t-\frac{c_1(\La_t) Y_t}{m_1(\La_t)+m_2(\La_t)X_t+m_3(\La_t)Y_t}\Big)\d t+\alpha(\La_t)\d B_1(t)\\
    &\leq \big(a_1(\La_t)-\frac12 \alpha^2(\La_t)\big)\d t+\alpha(\La_t)\d B_1(t)=:\d \ln \tilde{\varphi}_t.
  \end{align*}
  By the comparison theorem for stochastic differential equations (cf. \cite{IW}), we have \[\ln X_t\leq \ln \tilde{\varphi}_t\ \ \text{for any $t>0$}\ a.s.\]
  Consequently,
  \[\frac{\ln X_t }{t} \leq \frac{\ln \tilde\varphi_t}{t} =\frac{\ln x_0}{t}+\frac 1t\int_0^t\big(a_1(\La_s)
  -\frac12\alpha^2(\La_s)\big)\d s+\frac{1}t\int_0^t\alpha(\La_s)\d B_1(s).\]
  By the strong ergodicity theorem,
  \[\lim_{t\ra \infty} \Big\{\frac 1t\int_0^t\big(a_1(\La_s)-\frac 12 \alpha^2(\La_s)\big)\d s+
  \frac{1}t\int_0^t\alpha(\La_s)\d B_1(s)\Big\}=\sum_{i\in \S}\mu_i(a_1(i)-\frac 12\alpha^2(i))\quad a.s.\]
 Here we have used the fact that
  \[\lim_{t\ra \infty}\frac 1t\int_0^t \!\alpha(\La_s)\d B_1(s)=0\quad a.s.,\]
  which is due to the boundedness of $(\alpha(k))$ and the strong law of large numbers (cf. \cite[Theorem 1.3.4]{Mao}).
  Therefore, when $\sum_{i\in \S}\mu_i(a_1(i)-\frac 12\alpha^2(i))<0$, we have
  $\limsup_{t\ra \infty}\frac{\ln X_t}{t}<0$ a.s., which further implies that  $\dis\lim_{t\ra \infty} X_t=0$ a.s.

  With $\lim_{t\ra \infty} X_t=0$ a.s. in hand, we shall show that  $\lim_{t\ra\infty} Y_t=0$ a.s.  Namely, when the prey
   $(X_t)$ dies out, the predator $(Y_t)$ also dies out.
  By It\^o's formula, it holds
  \begin{align*}
   \limsup_{t\ra \infty} \frac{\ln Y_t}{t}&\leq \limsup_{t\ra \infty}\Big[\frac{\ln y_0}{t}\!+\!\frac 1t\int_0^t\!\Big(\!-\!a_2(\La_s)\!-\!\frac{\beta^2(\La_s)}{2}\!
   -\!\frac{c_2(\La_s)X_s}{m_1(\La_s)}\Big)\d s+\frac{1}{t}\int_0^t\!\beta(\La_s)\d B_2(s)\Big]\\
    &=-\sum_{i\in\S}\mu_i(a_2(i)+\frac12 \beta^2(i))<0,\quad a.s.,
  \end{align*} which yields immediately that $\lim_{t\ra \infty}Y_t=0$ a.s. The proof is therefore completed.
\end{proof}

\begin{prop}\label{t-5}
If\, $\dis \sum_{i\in \S}\mu_i\big(a_1(i)-\frac 12 \alpha^2(i)\big)>0$, and $\lambda<0$, where $\lambda$ is defined by (\ref{lam}), then almost surely $\lim_{t\ra \infty} Y_t=0$,   $\limsup_{t\ra \infty} X_t>0$, and the distribution of $(X_t,\La_t)$ converges weakly to $\pi^\varphi$.
\end{prop}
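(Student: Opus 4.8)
The plan is to prove the three assertions in turn, leaning on the pathwise domination $X_t\le\varphi_t$ and on the strong ergodicity theorem (Proposition \ref{t-3}) for the process $(\varphi_t,\La_t)$, which is positive recurrent with stationary distribution $\pi^\varphi$ by Lemma \ref{t-2}(i) under the hypothesis $\sum_i\mu_i(a_1(i)-\frac12\alpha^2(i))>0$. First I would show $\lim_{t\ra\infty}Y_t=0$. Applying It\^o's formula to $\ln Y_t$, bounding the response term from above by replacing $X_t$ with $\varphi_t$ and discarding the nonpositive contributions $-b_2(\La_t)Y_t$ and $m_3(\La_t)Y_t$, gives
\[
\frac{\ln Y_t}{t}\le\frac{\ln y_0}{t}+\frac1t\int_0^t\Big(-a_2(\La_s)-\frac12\beta^2(\La_s)+\frac{c_2(\La_s)\varphi_s}{m_1(\La_s)+m_2(\La_s)\varphi_s}\Big)\d s+\frac1t\int_0^t\beta(\La_s)\d B_2(s).
\]
The integrand is a \emph{bounded} measurable function of $(\varphi_s,\La_s)$, so Proposition \ref{t-3} identifies its Ces\`aro limit as $\lambda$ (here one uses that the $\S$-marginal of $\pi^\varphi$ is $\mu$, since $(\La_t)$ is autonomous), while the martingale term vanishes by the strong law of large numbers. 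Hence $\limsup_{t\ra\infty}\frac1t\ln Y_t\le\lambda<0$, which forces $Y_t\ra 0$ and, crucially, the \emph{exponential} bound $Y_t\le\e^{\lambda t/2}$ for all large $t$ a.s.

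For $\limsup_{t\ra\infty}X_t>0$ I would argue by contradiction. On the event $\{X_t\ra 0\}$ both $b_1(\La_s)X_s$ and the response term $\frac{c_1(\La_s)Y_s}{m_1(\La_s)+m_2(\La_s)X_s+m_3(\La_s)Y_s}$ tend to $0$, so their Ces\`aro averages vanish; combining this with the ergodic theorem for $(\La_t)$ and with the vanishing of the martingale, the It\^o expansion of $\ln X_t$ yields $\frac1t\ln X_t\ra\sum_i\mu_i(a_1(i)-\frac12\alpha^2(i))>0$, contradicting $\ln X_t\ra-\infty$. Thus $\p(X_t\ra 0)=0$, and since $X_t>0$ for all $t$ by Lemma \ref{t-0}, we get $\limsup_{t\ra\infty}X_t>0$ a.s.

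The main work, and the expected obstacle, is the weak convergence of $(X_t,\La_t)$ to $\pi^\varphi$. The decisive observation is that the Brownian terms cancel in $U_t:=\ln\varphi_t-\ln X_t\ge 0$, leaving the \emph{deterministic} equation $\dot U_t=-b_1(\La_t)(\varphi_t-X_t)+R_t$, where $R_t=\frac{c_1(\La_t)Y_t}{m_1(\La_t)+m_2(\La_t)X_t+m_3(\La_t)Y_t}\ge 0$ is the vanishing perturbation. Since $R_t\le\frac{\check c_1}{\hat m_1}Y_t$ and $Y_t$ decays exponentially, $\int_0^\infty R_s\d s<\infty$ a.s.; writing $\varphi_s-X_s=\varphi_s(1-\e^{-U_s})\ge 0$ and using $U_t\ge 0$ with $U_0=0$ gives $\int_0^\infty b_1(\La_s)\varphi_s(1-\e^{-U_s})\d s\le\int_0^\infty R_s\d s<\infty$. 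Consequently both integrals in the expression for $U_t$ converge, so $U_t\ra U_\infty\ge 0$. Were $U_\infty>0$, then for large $s$ one would have $1-\e^{-U_s}\ge 1-\e^{-U_\infty/2}>0$, whence the last integral dominates $\hat b_1(1-\e^{-U_\infty/2})\int^\infty\varphi_s\d s=\infty$, because $\frac1t\int_0^t\varphi_s\d s$ has a strictly positive $\liminf$ (via \eqref{erg-3} and $b_1\le\check b_1$); this contradiction forces $U_\infty=0$, i.e. $X_t/\varphi_t\ra 1$ a.s.

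Finally I would upgrade this pathwise coalescence to weak convergence. Since $(\varphi_t,\La_t)$ converges weakly to $\pi^\varphi$, the family $\{\varphi_t\}$ is tight, so $X_t-\varphi_t=\varphi_t(X_t/\varphi_t-1)\ra 0$ in probability; testing against bounded Lipschitz functions then shows $(X_t,\La_t)$ and $(\varphi_t,\La_t)$ share the same weak limit $\pi^\varphi$. The two delicate points to carry out carefully are the passage from $\limsup\frac1t\ln Y_t\le\lambda<0$ to the integrability $\int_0^\infty R_s\d s<\infty$, and the conversion of $U_t\ra 0$ into weak convergence through the tightness of $\varphi_t$; the cancellation of the noise in $U_t$ is what makes the latter tractable.
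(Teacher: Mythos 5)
Your proof is correct, and after the first step it takes a genuinely different route from the paper's. The extinction of $(Y_t)$ is handled exactly as in the paper: dominate $X_t$ by $\varphi_t$, apply Proposition \ref{t-3} to the bounded integrand (noting the $\S$-marginal of $\pi^\varphi$ is $\mu$), and conclude $\limsup_{t\ra\infty}\frac 1t\ln Y_t\leq\lambda<0$. From there the paper proceeds in the opposite logical order to yours: it first asserts the weak convergence of $(X_t,\La_t)$ to $\pi^\varphi$ by a sandwich argument (once $Y_t\leq\veps$, the drift of $X_t$ is squeezed between that of an $\veps$-perturbed logistic equation and that of $\varphi_t$), and then deduces $\limsup_{t\ra\infty}X_t>0$ from the weak convergence: if $X_t\ra 0$ on a set of positive probability, the limit $\pi^\varphi$ would charge $\{0\}\times\S$, contradicting Lemma \ref{t-2}. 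You instead prove $\limsup_{t\ra\infty}X_t>0$ directly and independently of the weak convergence (on $\{X_t\ra 0\}$ the Ces\`aro averages of $b_1(\La_s)X_s$ and of the response term vanish, so the It\^o expansion forces $\frac 1t\ln X_t\ra\sum_i\mu_i(a_1(i)-\frac 12\alpha^2(i))>0$, a contradiction), and you prove the weak convergence by pathwise coalescence: since $\ln\varphi_t$ and $\ln X_t$ carry the identical noise term $\alpha(\La_t)\d B_1(t)$, the difference $U_t=\ln\varphi_t-\ln X_t\geq 0$ satisfies a random ODE whose source term $R_t\leq\frac{\check c_1}{\hat m_1}Y_t$ is integrable on $[0,\infty)$ thanks to the exponential decay of $Y_t$; nonnegativity of $U_t$ then bounds $\int_0^\infty b_1(\La_s)(\varphi_s-X_s)\d s$, and $U_\infty>0$ is ruled out because \eqref{erg-3} makes $\int^\infty\varphi_s\d s$ diverge. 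This is sound, and in fact your treatment of the weak convergence is more complete than the paper's: the paper's ``which yields'' step conceals a nontrivial limiting argument in $\veps$ for the stationary law of the lower comparison process, whereas your coalescence plus tightness (available from Lemma \ref{t-1} or from the convergence of $(\varphi_t,\La_t)$) replaces it with explicit estimates. The one external ingredient you share with the paper is the standard fact that the positive recurrent process $(\varphi_t,\La_t)$ has its time-$t$ law converging weakly to $\pi^\varphi$; you need it to identify the limit in the tightness step, and the paper needs it (implicitly, for both bounding processes) in its sandwich.
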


\begin{proof}
  We first show that $(Y_t)$ tends to be extinct. By It\^o's formula, we have
  \[\d \ln Y_t=\Big(-a_2(\La_t)-\frac12\beta^2(\La_t)-b_2(\La_t)Y_t
  +\frac{c_2(\La_t)X_t}{m_1(\La_t)+m_2(\La_t)X_t+m_3(\La_t)Y_t}\Big)\d t+\beta(\La_t)\d B_2(t).\]
  Since $X_t\leq \varphi_t$ a.s. for any $t\geq 0$ and $(\varphi_t,\La_t)$ is positive recurrent thanks to Lemma \ref{t-2}, in addition to
   the comparison theorem and Proposition \ref{t-3}, we obtain that
  \begin{align*}
    \limsup_{t\ra\infty}\frac 1t \ln Y_t&\leq \limsup_{t\ra \infty}\frac 1t\int_0^t\Big(-a_2(\La_s)-\frac1 2\beta^2(\La_s)+\frac{c_2(\La_s)\varphi_s}{m_1(\La_s)+m_2(\La_s)\varphi_s}\Big)\d s\\
    &\quad +\limsup_{t\ra \infty}\frac 1t\int_0^t\!\beta(\La_s)\d B_2(s)\\&=\lambda<0,\quad a.s.
  \end{align*}
  Therefore, $\lim_{t\ra \infty} Y_t=0$ a.s.

  Next, as $\lim_{t\ra \infty} Y_t=0$ a.s., for any $\veps>0$ there exist  a measurable subset $\Omega_\veps\subset \Omega$ with $\p(\Omega_\veps)>1-\veps$ and a positive constant $t(\veps)$ such that for any $t>t(\veps)$, $\omega\in \Omega_\veps$,
  \begin{align*}
    \d X_t&\geq X_t\Big(a_1(\La_t)-b_1(\La_t)X_t-
    \frac{c_1(\La_t)\veps}{m_1(\La_t)+m_2(\La_t)X_t+m_3(\La_t)\veps}\Big)\d t+\alpha(\La_t)X_t\d
    B_1(t),
  \end{align*}
  which yields that the distribution of $(X_t,\La_t)$ converges weakly to the stationary distribution $\pi^\varphi$ of $(\varphi_t,\La_t)$ as
  $t\ra \infty$. Indeed, let $(\varphi^\veps_t)$ satisfy
  \[\d \varphi_t^\veps=\varphi_t^\veps\Big(a_1(\La_t)-b_1(\La_t)\varphi_t^\veps-
  \frac{c_1(\La_t)\veps}{m_1(\La_t)+m_2(\La_t)\varphi_t^\veps+m_3(\La_t)\veps}\Big)\d t
  +\alpha(\La_t)\varphi_t^\veps \d B_1(t),\]
  with $ \varphi_0^\veps=x_0$. For any bounded increasing continuous functions $f$ on $\R$, we have
  \[\E f(\varphi_t)\geq \E f(X_t)\geq \E[f(X_t)\mathbf 1_{\Omega_\veps}]-\veps \|f\|\geq \E[f(\varphi_t^\veps)\mathbf 1_{\Omega_\veps}]-\veps\|f\|,\]
  where $\|f\|=\sup_x|f(x)|<\infty$. Letting $\veps\ra 0$ and then $t\ra \infty$, we get
  \[\lim_{t\ra \infty} \E f(X_t)=\sum_{i\in\S}\int_{\R_+}f(x)\pi^\varphi(\d x,i).\]
  The linearity in the previous equation implies that it also holds for every bounded decreasing continuous functions, hence for every bounded variation functions. Then it is easy to see that the distribution of $(X_t,\La_t)$ converges weakly to $\pi^\varphi$ as desired.

  To complete the proof, we also need to show that $\limsup_{t\ra \infty} X_t>0$
  a.s.
Indeed, if $\p(\lim_{t\ra \infty}X_t=0)>0$, there exists a measurable subset $\Omega_0$ of $\Omega$ such that $\p(\Omega_0)>0$ and $\forall\,\omega\in\Omega_0$, $\lim_{t\ra\infty} X_t(\omega)=0$.
  For any $\delta>0$, as the distribution of $(X_t,\La_t)$ converges weakly to $\pi^\varphi$, we have
  \begin{align*}
    \pi^\varphi([0,\delta]\times \S)\geq \limsup_{t\ra\infty}\p(X_t\in [0,\delta])\geq \p(\Omega_0)>0.
  \end{align*}
  By the arbitrariness of $\delta$, we have $\pi^\varphi(\{0\}\times\S)>\p(\Omega_0)>0$ which contradicts the fact that $\pi^\varphi$ is a probability measure on $(0,\infty)\times\S$ (see Lemma \ref{t-2}).
  We get the desired conclusion.
\end{proof}

\begin{prop}\label{t-6}
If\, $\dis\sum_{i\in \S}\mu_i(a_1(i)-\frac1 2\alpha^2(i))>0$, $\dis\sum_{i\in\S}\mu_i\big(a_2(i)+\frac 12\beta^2(i)-\frac{c_2(i)}{m_2(i)}\big)<0$, and $\bar\lambda>0$, where $\bar\lambda$ is defined by (\ref{barlam}), then  $\dis\limsup_{t\ra \infty} X_t>0$ a.s.,
$\dis \limsup_{t\ra\infty} Y_t>0$ a.s. and $(X_t, Y_t,\La_t)$ has a stationary distribution.
\end{prop}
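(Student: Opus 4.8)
The target has three separate conclusions: prey persistence $\limsup_{t\to\infty}X_t>0$, predator persistence $\limsup_{t\to\infty}Y_t>0$, and existence of a stationary distribution for $(X_t,Y_t,\La_t)$. Under the standing hypotheses, Lemma \ref{t-2} makes both $(\varphi_t,\La_t)$ and $(\psi_t,\La_t)$ positive recurrent with stationary measures $\pi^\varphi,\pi^\psi$ on $(0,\infty)\times\S$, and I would freely use the pathwise domination $X_t\le\varphi_t$, $Y_t\le\psi_t$, the strong ergodic theorem (Proposition \ref{t-3}), and the fact that positive recurrence of $(\psi_t,\La_t)$ forces $\frac1t\ln\psi_t\to0$ a.s.

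I would treat prey persistence first, by contradiction. Suppose $\p(\lim_t X_t=0)>0$. On that event the predator loses its food source: since $\frac{c_2(\La_t)X_t}{m_1(\La_t)+\cdots}\le\frac{c_2(\La_t)}{m_1(\La_t)}X_t\to0$, Itô's formula for $\ln Y_t$ together with the ergodic theorem gives $\limsup_t\frac1t\ln Y_t\le-\sum_{i\in\S}\mu_i(a_2(i)+\frac12\beta^2(i))<0$, so $Y_t\to0$ there as well. But once $Y_t\to0$ the coupling term $\frac{c_1 Y_t}{\cdots}$ in the $X$-equation vanishes asymptotically, so $X_t$ becomes asymptotically the autonomous diffusion $\varphi_t$; sandwiching $X_t$ between $\varphi_t$ and the $\veps$-perturbed process exactly as in Proposition \ref{t-5} shows its occupation measure converges to $\pi^\varphi$, which, being supported in $(0,\infty)\times\S$, contradicts $X_t\to0$. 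Hence $\limsup_t X_t>0$ a.s.

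The heart of the matter is predator persistence, where $\bar\lambda$ enters through a cancellation. Since $\d\ln\psi_t$ and $\d\ln Y_t$ are driven by the same $\beta(\La_t)\d B_2(t)$, subtracting them removes the martingale and the common $-a_2-\frac12\beta^2$ drift, leaving
\[
\d(\ln\psi_t-\ln Y_t)=\Big(\tfrac{c_2(\La_t)}{m_2(\La_t)}-b_2(\La_t)\psi_t+b_2(\La_t)Y_t-\tfrac{c_2(\La_t)X_t}{m_1(\La_t)+m_2(\La_t)X_t+m_3(\La_t)Y_t}\Big)\d t.
\]
Suppose for contradiction $\p(\lim_t Y_t=0)>0$. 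On that event $\frac1t\int_0^t b_2(\La_s)Y_s\d s\to0$ and, as in the prey step, the occupation measure of $X_t$ converges to $\pi^\varphi$ while the gain term tends to $\frac{c_2(\La_s)X_s}{m_1(\La_s)+m_2(\La_s)X_s}$; combining these with $\frac1t\ln\psi_t\to0$ and the $\psi$-ergodic theorem, the displayed identity yields $\frac1t\ln Y_t\to\sum_{i\in\S}\int\frac{c_2(i)x}{m_1(i)+m_2(i)x}\pi^\varphi(\d x,i)-\sum_{i\in\S}\mu_i\frac{c_2(i)}{m_2(i)}+\sum_{i\in\S}\int b_2(i)y\pi^\psi(\d y,i)=\bar\lambda>0$. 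Thus $Y_t\to\infty$ on a set where $Y_t\to0$, a contradiction, so $\limsup_t Y_t>0$ a.s. The main obstacle is precisely the step asserting that $Y_t\to0$ forces the time average of the bounded increasing map $x\mapsto\frac{c_2 x}{m_1+m_2 x}$ along $X_t$ to converge to its $\pi^\varphi$-average: this asymptotic-autonomy claim must be justified by a careful pathwise sandwich of $X_t$ between $\varphi_t$ and the lower $\veps$-perturbed diffusion, sending $\veps\downarrow0$ only after $t\to\infty$.

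Finally, for the stationary distribution I would show that the family of time-averaged laws $\frac1t\int_0^t\p\big((X_s,Y_s,\La_s)\in\cdot\big)\d s$ is tight on $(0,\infty)^2\times\S$ and then invoke the Krylov--Bogolyubov/Khasminskii argument that any weak limit is invariant for $(X_t,Y_t,\La_t)$. Tightness at infinity is immediate from the uniform moment bounds of Lemma \ref{t-1} via $X_t\le\varphi_t$, $Y_t\le\psi_t$ and Chebyshev's inequality; the delicate part is tightness at the axes, i.e.\ showing that no time-averaged mass escapes to $\{x=0\}\cup\{y=0\}$, which I would derive from the inward drift of $\ln X_t$ and of $\ln Y_t$ near the boundary, guaranteed respectively by $\sum_{i\in\S}\mu_i(a_1(i)-\frac12\alpha^2(i))>0$ and by $\bar\lambda>0$ (the very estimates driving the two persistence arguments), packaged as a Lyapunov/Khasminskii condition. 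Controlling this boundary behaviour uniformly in $t$ is the second main obstacle.
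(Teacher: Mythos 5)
Your route is genuinely different from the paper's, and the algebra at its core is right: on $\{Y_t\ra 0\}$ the formal limit of $\frac1t\ln Y_t$ that you compute is indeed $\bar\lambda$. But the paper never conditions on any event: it extracts from the limsup bounds (\ref{2.6.1})--(\ref{2.6.2}) the \emph{unconditional} time-average inequalities (\ref{2.6.3}) and (\ref{2.6.6}) (the trick of \cite{DDY}), takes the linear combination that cancels the $(\varphi_s-X_s)$ terms, and lands on $\liminf_{t\ra\infty}\frac1t\int_0^tY_s\,\d s\geq c\,\bar\lambda>0$ a.s.\ for an explicit $c>0$ (this is (\ref{2.6.7})), from which $\limsup_{t\ra\infty}Y_t>0$ and then $\limsup_{t\ra\infty}X_t>0$ follow; note it proves predator persistence first and prey persistence as a consequence, the reverse of your order, and it obtains the stronger persistence-in-mean statement. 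Two of your steps are genuine gaps, and they are exactly the steps this arrangement is designed to avoid.

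First, ``positive recurrence of $(\psi_t,\La_t)$ forces $\frac1t\ln\psi_t\ra0$ a.s.'' is not a citable fact: tightness of the laws gives this only in probability, and the a.s.\ version needs control of excursion depths, or else an equality version of the ergodic theorem for the unbounded function $y\mapsto b_2(i)y$ --- which Proposition \ref{t-3} proves only as the one-sided bound (\ref{erg-4}). Indeed, if your claim held, the equation for $\ln\psi_t$ would give $\lim_{t\ra\infty}\frac1t\int_0^tb_2(\La_s)\psi_s\,\d s=-\sum_{i\in\S}\mu_i\big(a_2(i)+\frac12\beta^2(i)-\frac{c_2(i)}{m_2(i)}\big)$ a.s., and combining with (\ref{erg-4}) would force $\bar\lambda\leq\lambda$ --- quantitative information about $\pi^\psi$ that the paper explicitly says it cannot obtain when $N>1$. (This particular gap is repairable: your contradiction only needs $\limsup_{t\ra\infty}\frac1t\ln\psi_t\geq0$, which is free from recurrence by evaluating along return times of $\psi_t$ to a fixed point.) Second, and more seriously, both of your persistence arguments hang on the claim that on a positive-probability event such as $\{Y_t\ra0\}$ the occupation measure of $(X_t,\La_t)$ converges to $\pi^\varphi$. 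You flag this as the main obstacle but do not fill it, and filling it is a substantial project: Egorov plus a pathwise comparison valid on a sub-event (e.g.\ replacing $Y_t$ by $Y_t\wedge\veps$ in the coefficients), positive recurrence and an ergodic theorem for the $\veps$-perturbed lower process (not covered by Lemma \ref{t-2} or Proposition \ref{t-3}), and weak continuity of its stationary distribution as $\veps\downarrow0$. The same objection applies to your prey step ``exactly as in Proposition \ref{t-5}'': there the sandwich is available because $Y_t\ra0$ holds almost surely, whereas your conditioning event may have probability far below one, so no conclusion about the (unconditional) law of $(X_t,\La_t)$ is available. Finally, for the stationary distribution you correctly identify tightness at the axes as the delicate point but leave it unproven; the paper instead concludes directly from the moment bounds of Lemma \ref{t-1} via \cite[Theorem 4.14]{Chen}.
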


\begin{proof}
  Since almost surely $X_t\leq \varphi_t$ and $Y_t\leq \psi_t$ for any $t>0$, the strong ergodicity theorem yields that almost surely
  \begin{equation}\label{2.6.1}
    \limsup_{t\ra\infty}\frac 1t \ln X_t\leq \limsup_{t\ra \infty} \frac 1t\ln\varphi_t\leq\sum_{i\in\S}\mu_i\big(a_1(i)-\frac 12\alpha^2(i)\big)
    -\sum_{i\in\S}\int_{\R_+}\!b_1(i) x\pi^\varphi(\d x,i),\\
\end{equation}
    \begin{equation}
    \label{2.6.2}
    \begin{split}
    \limsup_{t\ra \infty}\frac 1t\ln Y_t&\leq \limsup_{t\ra \infty}\frac1 t\ln\psi_t
    \leq\!-\!\sum_{i\in\S}\mu_i\big(a_2(i)\!+\!\frac 12\beta^2(i)\!-\!\frac{c_2(i)}{m_2(i)}\big)
    \!\\
    &\quad-\!\sum_{i\in\S}\!\int_{\R_+}\!b_2(i)y\pi^\psi(\d y,i).
\end{split}
  \end{equation}
  Next, we apply the trick used in \cite[Theorem 2.2]{DDY} to estimate the lower bounds.
  \begin{align*}
    \frac 1t\ln X_t&=\frac 1t \ln x_0+\frac 1t\int_0^t\!(a_1(\La_s)-\frac 12\alpha^2(\La_s)-b_1(\La_s)\varphi_s)\d s\\
    &\quad+\frac 1t\int_0^t\!\Big(\!b_1(\La_s)(\varphi_s-X_s)-\frac{c_1(\La_s)Y_s}{m_1(\La_s)
    \!+\!m_2(\La_s)X_s\!+\!m_3(\La_s)Y_s}\Big)\d s +\frac 1 t\int_0^t\!\alpha(\La_s)\d B_1(s)\\
    &\geq \frac 1t \ln x_0+\frac 1t\int_0^t\big(a_1(\La_s)-\frac1 2\alpha^2(\La_s)-b_1(\La_s)\varphi_s\big)\d s\\
    &\quad+\frac1 t\int_0^t\!\Big(b_1(\La_s)(\varphi_s-X_s)-\frac{c_1(\La_s)Y_s}{m_1(\La_s)}\Big)\d s+\frac 1t \int_0^t\!\alpha(\La_s)\d B_1(s),\quad a.s.
  \end{align*}
  Combining this with (\ref{2.6.1}), we get
  \begin{equation*}
   \liminf_{t\ra \infty} \frac  1t \int_0^t\!\Big(b_1(\La_s)(X_s-\varphi_s)+\frac{c_1(\La_s)Y_s}{m_1(\La_s)}\Big)\d s
   \geq 0, \quad a.s.,
  \end{equation*}
  and further
  \begin{equation} \label{2.6.3}
  \liminf_{t\ra \infty}\frac 1t\int_0^t\!\big(\hat b_1(X_s-\varphi_s)+\frac{\check c_1}{\hat m_1}Y_s\big)\d s\geq 0,\quad a.s.
  \end{equation}
  For the process $(Y_t)$, we have
\begin{align*}
  \frac 1t \ln Y_t&=\frac 1t \ln y_0-\frac 1t\int_0^t\!b_2(\La_s)Y_s\d s
  -\frac 1t\int_0^t\!\Big(a_2(\La_s)+\frac 12\beta^2(\La_s)-\frac{c_2(\La_s)\varphi_s}{m_1(\La_s)+m_2(\La_s)\varphi_s}\Big)\d s\\
  &\quad -\frac 1t\int_0^t\!\Big(\frac{c_2(\La_s)\varphi_s}{m_1(\La_s)+m_2(\La_s)\varphi_s}
  -\frac{c_2(\La_s)X_s}{m_1(\La_s)+m_2(\La_s)X_s}\Big)\d s\\
  &\quad -\frac1t\int_0^t\!\Big(\frac{c_2(\La_s)X_s}{m_1(\La_s)+m_2(\La_s)X_s}
  -\frac{c_2(\La_s)X_s}{m_1(\La_s)+m_2(\La_s)X_s+m_3(\La_s)Y_s}\Big)\d s\\
  &\quad +\frac 1t \int_0^t\beta(\La_s)\d B_2(s)\\
  &\geq \frac 1t\ln y_0+\frac 1t\int_0^t\!\Big(-a_2(\La_s)-\frac 12\beta^2(\La_s)+\frac{c_2(\La_s)\varphi_s}{m_1(\La_s)+m_2(\La_s)\varphi_s}\Big)\d s\\
  &\quad -\frac 1t\int_0^t\!\Big(\frac{c_2(\La_s)(\varphi_s-X_s)}{m_1(\La_s)+m_2(\La_s)\varphi_s}
  +\Big(\frac{c_2(\La_s)m_3(\La_s)}{m_1(\La_s)m_2(\La_s)}+b_2(\La_s)\Big)Y_s\Big)\d s\\
  &\quad+\frac 1t\int_0^t\beta(\La_s)\d B_2(s), \quad a.s.
\end{align*}
It follows that
\begin{equation}\label{2.6.4}
\begin{split}
&\liminf_{t\ra \infty}\frac 1t \int_0^t\!\Big(\frac{c_2(\La_s)(\varphi_s-X_s)}{m_1(\La_s)+m_2(\La_s)\varphi_s}
+\Big(\frac{c_2(\La_s)m_3(\La_s)}{m_1(\La_s)m_2(\La_s)}+b_2(\La_s)\Big)Y_s\Big)\d s\\
&\geq \liminf_{t\ra \infty}\frac 1t\int_0^t\!\Big(\!-\!a_2(\La_s)\!-\!\frac 12\beta^2(\La_s)
\!+\!\frac{c_2(\La_s)\varphi_s}{m_1(\La_s)\!+\!m_2(\La_s)\varphi_s}\Big)\d s\!-\!\limsup_{t\ra \infty}\frac 1t\ln Y_t, \quad a.s.
\end{split}
\end{equation}
Invoking (\ref{2.6.2}) and taking Proposition \ref{t-3} and Lemma
\ref{t-2} into consideration, we deduce that
\begin{equation}\label{2.6.5}
\begin{split}
&\liminf_{t\ra\infty} \frac 1t\int_0^t\!\Big(\frac{c_2(\La_s)(\varphi_s-X_s)}{m_1(\La_s)+m_2(\La_s)\varphi_s}
+\Big(\frac{c_2(\La_s)m_3(\La_s)}{m_1(\La_s)m_2(\La_s)}+b_2(\La_s)\Big)Y_s\Big)\d s\\
&\geq \lambda+\sum_{i\in \S}\mu_i\big(a_2(i)+\frac 12 \beta^2(i)-\frac{c_2(i)}{m_2(i)}\big)
+\sum_{i\in\S}\int_{\R_+}\!b_2(i)y\,\pi^{\psi}(\d y, i)=\bar\lambda,\quad a.s.
\end{split}
\end{equation}
and further that
\begin{equation}\label{2.6.6}
\liminf_{t\ra \infty}\frac 1t\int_0^t\!\Big(\frac{\check c_2}{\hat m_1}(\varphi_s-X_s)+\big(\frac{\check c_2\check m_3}{\hat m_1\hat m_2}+\check b_2\big)Y_s\Big)\d s
\geq \bar\lambda,\ a.s.
\end{equation}
Dividing both sides of (\ref{2.6.3}) and (\ref{2.6.6}) by $\hat b_1$ and $\check c_2/\hat m_1$ respectively, and adding them side by side, we obtain
\begin{equation}\label{2.6.7}
\liminf_{t\ra \infty}\frac 1t \int_0^tY_s\d s\geq \frac{\hat m_1}{\check c_2}\Big(\frac{\check m_3}{\hat m_2}+\frac{\check b_2\hat m_1}{\check c_2}+\frac{\check c_1}{\hat m_1\hat b_1}\Big)^{-1}\bar\lambda, \quad a.s.
\end{equation}
Hence, if $\bar\lambda>0$, then the inequality (\ref{2.6.7}) implies
that 
  $\limsup_{t\ra \infty} Y_t>0$ a.s.

Note the following facts:
\[\d \ln Y_t\leq \Big(-a_2(\La_t)-\frac 12\beta^2(\La_t)+\frac{c_2(\La_t) X_t}{m_1(\La_t)+m_2(\La_t)
X_t+m_3(\La_t)Y_t}\Big)\d t+\beta(\La_t)\d B_2(t),\] and $\lim_{t\ra
\infty}\frac 1t\int_0^t\beta(\La_s)\d B_2(s)=0$ a.s.  If
$\p(\{\omega: \lim_{t\ra\infty} X_t(\omega)=0\})>0$, then it must
hold
\[\p\Big(\limsup_{t\ra\infty} \frac 1 t\ln Y_t<0\Big)>0.\]
Hence, the fact that  $\limsup_{t\ra \infty} Y_t>0$ a.s. implies
that $\limsup_{t\ra\infty} X_t>0$ a.s.

At last, we show the existence of an invariant probability measure
of the process $(X_t,Y_t,\La_t)$. Note that almost surely  $X_t\leq
\varphi_t$ and $Y_t\leq \psi_t$ for any $t>0$. By (\ref{2.4}) and
(\ref{2.5}) of Lemma \ref{t-1}, we deduce  that for any $p>1$
satisfying $ -\hat a_2+\frac{\check c_2}{\hat
m_2}+\frac{p-1}{2}\check\beta^2>0$, there exists a constant $C>0$
such that
\[\frac1 t\int_0^t \E \big(X_s^p+Y_s^p+\La_s^p\big)\d s\leq C.\]
According to \cite[Theorem 4.14]{Chen}, there exists a stationary distribution for $(X_t,Y_t,\La_t)$. The proof is completed.
\end{proof}

\noindent\textbf{Proof of Theorem \ref{main}}\\
The argument follows immediately from 
Propositions \ref{t-2}, and \ref{t-4}-\ref{t-6}. \fin

\begin{rem}
  According to a suggestion of Hai Dang Nguyen, the condition (1.7) in Theorem 1.1 can be improved by noting that $\bar \lambda=\lambda$. We have showed that this equality holds when $N=1$. Actually it also holds for general $N>1$. 
  By It\^o's formula and the strong ergodic theorem, 
  \[\lim_{t\to \infty} \frac{\ln \psi(t)}{t}=\lim_{t\to \infty} \frac{\ln \psi(0)}{t}+\bar \lambda-\lambda=\bar \lambda -\lambda.\]
  Since $(\psi(t))$ is ergodic in the current situation, then $\lim_{t\to\infty} \frac{\ln\psi(t)}{t}=0$, which yields $\bar\lambda=\lambda$. 
\end{rem}

\vskip 0.3cm \noindent\textbf{Acknowledgment:} The authors are very grateful to the referees for their valuable comments. The first author is
supported by NSFC (No.11401592); The second author is supported by
NSFC (No.11301030), NNSFC (No.11431014), 985-project.

\end{document}